\numberwithin{equation}{section}
\newcommand{\bC}{{\mathbb C}}
\newcommand{\bP}{{\mathbb P}}
\newcommand{\bQ}{{\mathbb Q}}
\newcommand{\bR}{{\mathbb R}}
\newcommand{\bZ}{{\mathbb Z}}
\newcommand{\cD}{{\mathcal D}}
\newcommand{\cO}{{\mathcal O}}
\newtheorem{thm}{Theorem}[section]
\newtheorem{prop}[thm]{Proposition}
\newtheorem{lem}[thm]{Lemma}
\newtheorem{cor}[thm]{Corollary}
\newtheorem{quest}[thm]{Question}
\theoremstyle{remark}
\newtheorem{rem}[thm]{Remark}
\newtheorem{ex}[thm]{Example}
\theoremstyle{definition}
\newtheorem{defn}[thm]{Definition}
\title{Compact K\"ahler manifolds with elliptic homotopy type}
\author{Jaume Amor\'os}
\address{Dept.\ Matem\`atica Aplicada I, Universitat Polit\`ecnica de
  Ca\-ta\-lu\-nya, Diagonal 647, 08028 Barcelona, Spain}
\email{jaume.amoros@upc.edu}
\author{Indranil Biswas}
\address{School of Mathematics, Tata Institute of Fundamental Research,
Homi Bhabha Rd, Bombay 400005, India}
\email{indranil@math.tifr.res.in}
\keywords{Rational homotopy, projective manifold, elliptic homotopy type}
\subjclass[2000]{55P62, 14P25}
\date{}
\begin{document}

\begin{abstract}
Simply connected compact K\"ahler manifolds of dimension up to three
with elliptic homotopy type 
are characterized in terms of their Hodge diamonds. This is applied
to classify the simply connected K\"ahler surfaces and 
Fano threefolds with elliptic homotopy type.
\end{abstract}

\maketitle

\section{Introduction}

Simply connected manifolds with elliptic homotopy type
(see Definition \ref{defb}) constitute an
extension, from the homotopy theoretic point of view,
of the 1--connected homogeneous manifolds.
A manifold $X$ with elliptic
homotopy type has nice homotopical properties, such as:
\begin{itemize}
\item The loop space $\Omega X$
has Betti numbers $$b_n(\Omega X)\,=\, O(n^r)\, ,$$ with $r\,= \,
\dim \pi_{\rm odd}(X) \otimes_{\mathbb Z} \bR$,
and, after localizing at finitely many primes, becomes homotopically 
equivalent 
to a product of spheres.
\item Sullivan's minimal model for $X$ turns into
a finitely generated commutative differential
graded algebra, simplifying its presentation as an algebraic scheme in 
algebro--geometric
homotopy theory (see \cite{KPT} and references therein). 
\item Due to the elliptic--hyperbolic alternative (see Theorem 
\ref{t:eh}), 
the description of the
homotopy groups of a manifold which is not of elliptic homotopy type 
becomes much more 
complex. 
\end{itemize}

Our objective in this work is to identify the simply connected compact 
K\"ahler manifolds,
of dimension up to three, that have elliptic homotopy type. 

For dimension two, we classify them modulo a well--known open question 
in surface theory.

\begin{thm} \label{t:ell2}
A 1--connected compact complex analytic surface has elliptic homotopy 
type 
if and only if it belongs to the following list:
\begin{enumerate}
\item the complex projective plane $\bP^2_{\bC}$,
\item Hirzebruch surfaces ${\mathbb S}_h\,=\,\bP_{\bP^1_{\bC}}(\cO 
\oplus 
\cO(h))$, 
for $h \,\ge\, 0$, and
\item 1--connected general type surfaces $X$ with $q(X)\,=\,p_g(X)\,=\,0$,
$K_X^2\,=\,8$ and $c_2(X)=4$.
\end{enumerate}
\end{thm}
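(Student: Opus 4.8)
The plan is to split the statement into a rational-homotopy part, which yields a purely numerical criterion, and an algebro-geometric part, where the Enriques--Kodaira classification does the work. First I would note that a $1$--connected compact complex surface $X$ has $b_1(X)=0$, so $b_1$ is even and $X$ is K\"ahler; hence $X$ is formal and its rational homotopy type is determined by the ring $H^*(X;\bQ)\,=\,\bQ\oplus H^2\oplus H^4$, with $H^4\cong\bQ$, with cup product $H^2\otimes H^2\to H^4$ a nondegenerate symmetric form by Poincar\'e duality, and with $\chi(X)\,=\,2+b_2(X)\,>\,0$. A formal $1$--connected space of positive Euler characteristic is elliptic exactly when $H^*(X;\bQ)$ is a complete intersection $\bQ[x_1,\dots,x_p]/(f_1,\dots,f_p)$ with $p=\dim\pi_{\mathrm{even}}=\dim\pi_{\mathrm{odd}}$. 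Here $H^{>0}$ is generated in degree $2$, so all $x_i$ have degree $2$, giving $p=b_2$, and the socle lies in degree $4$, so $\sum_i\deg f_i=4+2p$ while each $\deg f_i\ge 4$ by minimality. This forces $4p\le 4+2p$, i.e.\ $p\le 2$. Conversely, for $b_2\le 2$ the ring is a complete intersection: over $\bC$ the intersection form is unique, yielding the ring of $\bP^2$ when $b_2=1$ and that of $\bP^1\times\bP^1$ when $b_2=2$, so ellipticity holds, while Theorem \ref{t:eh} shows every surface with $b_2\ge 3$ is hyperbolic. This establishes the criterion that $X$ has elliptic homotopy type if and only if $b_2(X)\le 2$.

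The remaining, and harder, step is to list the $1$--connected compact complex surfaces with $b_2\le 2$ through the Enriques--Kodaira classification, arguing by the Kodaira dimension $\kappa$. Simple connectivity gives $q=0$, and passing to a minimal model does not raise $b_2$ (blow--ups increase it by one). For $\kappa=-\infty$, irrational ruled surfaces are excluded because $\pi_1$ surjects onto the fundamental group of the base curve; the rational surfaces then contribute $\bP^2$ when $b_2=1$ (item (i)) and exactly the Hirzebruch surfaces ${\mathbb S}_h$ when $b_2=2$, with ${\mathbb S}_1$ the one--point blow--up of $\bP^2$ (item (ii)). The cases $\kappa\in\{0,1\}$ are eliminated because the $1$--connected surfaces there are K3 surfaces and properly elliptic (Dolgachev-type) surfaces, all of which satisfy $\chi_{\mathrm{top}}\ge 12$ and hence $b_2\ge 10$.

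For $\kappa=2$ I would combine Noether's formula $c_1^2+c_2=12(1+p_g)$ with the constraint $b_2\le 2$. Since $X$ is K\"ahler we have $h^{1,1}\ge 1$, and $b_2=2p_g+h^{1,1}$ then forces $p_g=0$; thus $c_2=2+b_2$ and $c_1^2=10-b_2$. The value $b_2=1$ gives $c_1^2=3c_2=9$, i.e.\ equality in the Bogomolov--Miyaoka--Yau inequality, so $X$ would be a ball quotient with infinite $\pi_1$ and is excluded. The value $b_2=2$ gives precisely $q=p_g=0$, $K_X^2=8$ and $c_2=4$, which is item (iii); these are automatically minimal, since a blow--up of a minimal surface with $b_2=1$ would be a fake projective plane, again not simply connected.

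The main obstacle is this last general-type case. The numerics pin the invariants down completely, but the actual \emph{existence} and classification of $1$--connected surfaces of general type with $p_g=q=0$ and $K^2=8$ is the well--known open question to which the statement defers: the characterization is correct regardless, while whether the class in item (iii) is nonempty remains undecided. Everything preceding it is finite bookkeeping through the classification.
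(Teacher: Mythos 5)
Your proposal is correct, and the homotopy--theoretic half takes a genuinely different route from the paper's. The Enriques--Kodaira bookkeeping is essentially the paper's own argument (the paper rules out $\kappa=0,1$ via $c_1^2=0$ against $c_1^2=8$ rather than via $e(X)\ge 12$, and handles the $b_2=1$ general type case by citing Yau's characterization of $\bP^2_{\bC}$ rather than by invoking equality in Bogomolov--Miyaoka--Yau directly, but these are the same theorem). The divergence is in how ellipticity is detected. For the ``only if'' direction the paper uses Corollary \ref{c:b2}, i.e.\ the Friedlander--Halperin bound (ii), to get $b_2\le 2$; your socle--degree count for the complete intersection is an equivalent repackaging of the same structure theory. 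For the ``if'' direction the paper proves ellipticity of $\bP^2_{\bC}$ and ${\mathbb S}_h$ via the fibration Lemma \ref{l:fibration} and then transfers ellipticity to simply connected fake quadrics by Freedman's theorem (they are homeomorphic to ${\mathbb S}_0$ or ${\mathbb S}_1$); you instead prove the uniform intrinsic criterion that a formal $1$--connected space with $\chi>0$ is elliptic iff its cohomology is an evenly graded complete intersection, and check this directly for $b_2\le 2$. That is precisely the pure--Sullivan--algebra/Koszul--complex machinery the paper itself deploys only in Section \ref{s:solids} (Propositions \ref{p:diamonds3} and \ref{p:regular}) for threefolds; applied to surfaces it buys you a cleaner statement (``elliptic iff $b_2\le 2$'') and removes the dependence on Freedman's theorem, at the cost of invoking a heavier result from rational homotopy theory which you should attribute explicitly (Halperin's theorem on positively elliptic spaces, \cite[\S~32]{FHT}) rather than assert. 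One small point of care: for $b_2=1$ the single relation sits in degree $6$, not $4$, so your inequality $\deg f_i\ge 4$ is what is actually used, and it does hold; the argument goes through.
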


The surfaces of type (iii) in Theorem \ref{t:ell2} are
called simply connected {\em fake quadrics}.
The open question mentioned earlier is whether simply connected
fake quadrics actually exist (see Remark \ref{rem.FQ}). A
simply connected fake quadric,
if it exists, is homeomorphic to either the quadric
${\mathbb S}_0\,=\, \bP^1_{\bC} \times \bP^1_{\bC}$ or
the Hirzebruch surface ${\mathbb S}_1$ (the blow--up of
$\bP^2_{\bC}$ at a point); see Remark \ref{rem.FQ}. Therefore,
Theorem \ref{t:ell2} has the following corollary.

\begin{cor}\label{cor.int.}
A 1--connected compact K\"ahler surface has 
elliptic homotopy type if and only if it is homeomorphic to either 
$\bP^2_{\bC}$ or to a Hirzebruch surface ${\mathbb 
S}_h\,=\,\bP_{\bP^1_{\bC}}(\cO \oplus \cO(h))$ for some $h\, \geq\, 0$.
\end{cor}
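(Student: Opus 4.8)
The plan is to deduce the statement directly from Theorem \ref{t:ell2}, whose three--part list already enumerates every $1$--connected compact complex analytic surface with elliptic homotopy type, and therefore every such K\"ahler surface as well. The single additional input is the homeomorphism classification of the fake quadrics recorded in Remark \ref{rem.FQ}.

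For the implication from the geometric description to elliptic homotopy type, I would use that elliptic homotopy type is a rational homotopy invariant (it is read off from Sullivan's minimal model; see Definition \ref{defb}), and is in particular preserved under homeomorphism. Since $\bP^2_{\bC}$ and each Hirzebruch surface ${\mathbb S}_h$ occur as cases (i) and (ii) of Theorem \ref{t:ell2}, they have elliptic homotopy type; hence so does any K\"ahler surface homeomorphic to one of them.

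For the converse, let $X$ be a $1$--connected compact K\"ahler surface with elliptic homotopy type. As $X$ is in particular a compact complex analytic surface, Theorem \ref{t:ell2} applies and puts $X$ into one of the three cases. In cases (i) and (ii) the surface $X$ equals $\bP^2_{\bC}$ or a Hirzebruch surface, and there is nothing to prove. In case (iii) the surface $X$ is a simply connected fake quadric, and by Remark \ref{rem.FQ} it is then homeomorphic to either ${\mathbb S}_0\,=\,\bP^1_{\bC}\times\bP^1_{\bC}$ or to ${\mathbb S}_1$; both are Hirzebruch surfaces. Collecting the three cases, $X$ is homeomorphic to $\bP^2_{\bC}$ or to some ${\mathbb S}_h$, which is the assertion.

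Once Theorem \ref{t:ell2} is granted, the corollary is a short case analysis, so the argument carries essentially no difficulty of its own. The genuine content sits inside Remark \ref{rem.FQ}: the fake quadrics of type (iii) are distinguished from honest quadrics only by their complex structure, while topologically they share the Betti numbers, signature, and intersection--form parity of ${\mathbb S}_0$ or of ${\mathbb S}_1$. By Freedman's classification of simply connected closed smooth $4$--manifolds, these invariants pin down the homeomorphism type, forcing a fake quadric to be homeomorphic to whichever of the two surfaces has the matching (even or odd) intersection form. I expect this topological step to be the only real obstacle, and it is exactly the one the quoted remark resolves.
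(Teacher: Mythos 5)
Your proposal is correct and follows essentially the same route as the paper: the corollary is obtained from Theorem \ref{t:ell2} by invoking Remark \ref{rem.FQ} (Freedman's theorem) to identify the homeomorphism type of a hypothetical simply connected fake quadric with that of ${\mathbb S}_0$ or ${\mathbb S}_1$, together with the homeomorphism (indeed homotopy) invariance of elliptic homotopy type for the converse direction. Nothing is missing.
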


Next we classify the compact K\"ahler threefolds with
elliptic homotopy type in terms of the Hodge diamond.

\begin{thm} \label{t:diamonds3}
A 1--connected compact K\"ahler threefold has elliptic homotopy type
if and only if its Hodge diamond is one of the following:
$$
\text{(a)} \quad
\begin{array}{ccccccc}
   &   &   & 1 &   &   &   \\
   &   & 0 &   & 0 &   &   \\
   & 0 &   & 1 &   & 0 &   \\
 0 &   & 0 &   & 0 &   & 0 \\
   & 0 &   & 1 &   & 0 &   \\
   &   & 0 &   & 0 &   &   \\
   &   &   & 1 &   &   &   \end{array}
\qquad \text{(b)} \quad
\begin{array}{ccccccc}
   &   &   & 1 &   &   &   \\
   &   & 0 &   & 0 &   &   \\
   & 0 &   & 2 &   & 0 &   \\
 0 &   & 0 &   & 0 &   & 0 \\
   & 0 &   & 2 &   & 0 &   \\
   &   & 0 &   & 0 &   &   \\
   &   &   & 1 &   &   &   \end{array}
$$
$$
\text{(c)} \quad
\begin{array}{ccccccc}
   &   &   & 1 &   &   &   \\
   &   & 0 &   & 0 &   &   \\
   & 0 &   & 3 &   & 0 &   \\
 0 &   & 0 &   & 0 &   & 0 \\
   & 0 &   & 3 &   & 0 &   \\
   &   & 0 &   & 0 &   &   \\
   &   &   & 1 &   &   &   \end{array}
$$
\end{thm}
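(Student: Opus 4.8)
The plan is to exploit the fact that compact K\"ahler manifolds are formal (Deligne--Griffiths--Morgan--Sullivan), so that for such an $X$ the property of having elliptic homotopy type is intrinsic to the rational cohomology ring $H^*(X;\bQ)$. I would begin by recalling the structure theory of elliptic spaces: a $1$--connected elliptic space satisfies $\chi(X)\ge 0$, and $\chi(X)>0$ holds precisely when $H^{\mathrm{odd}}(X;\bQ)=0$, in which case $X$ is an $F_0$--space and $H^*(X;\bQ)$ is a complete intersection $\bQ[x_1,\dots,x_k]/(f_1,\dots,f_k)$ with the $x_i$ of even degree and $f_1,\dots,f_k$ a regular sequence. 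The first step is therefore to prove that an elliptic K\"ahler threefold has $\chi(X)>0$. Since $X$ is $1$--connected, $b_1=0$, so by Poincar\'e duality $\chi=2+2b_2-b_3$, and $\chi=0$ would force $b_3=2+2b_2\ge 4$ (as $b_2\ge 1$ for a K\"ahler class). I would exclude this by comparing the admissible Poincar\'e polynomials of elliptic spaces of formal dimension $6$ with the constraints imposed by Hard Lefschetz and Poincar\'e duality, leaving only $H^{\mathrm{odd}}=0$. Hence $b_1=b_3=b_5=0$ and $H^*(X;\bQ)$ is a complete intersection with even generators.

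Next I would determine the Betti numbers using Hard Lefschetz. Since $L\colon H^2\to H^4$ is surjective, $H^4$ is spanned by products of degree--$2$ classes, so there are no generators in degree $4$; and $H^6=\langle\omega^3\rangle$ is likewise generated by $H^2$, so there are none in degree $6$. Thus all $k=b_2$ generators lie in degree $2$, every relation has even degree $\ge 4$, and the formal--dimension identity $6=\sum_j\deg f_j-\sum_i\deg x_i$ gives $\sum_j\deg f_j=6+2k\ge 4k$, forcing $k\le 3$. The relation degrees are then $(8)$, $(4,6)$, $(4,4,4)$ for $k=1,2,3$, yielding exactly the three Poincar\'e polynomials
$$
1+t^2+t^4+t^6,\qquad 1+2t^2+2t^4+t^6,\qquad 1+3t^2+3t^4+t^6,
$$
which match the Betti numbers of the diamonds (a), (b), (c).

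It remains to pin down the Hodge numbers, that is, to show the Hodge structure is pure of type $(p,p)$, equivalently $h^{2,0}=0$. For $b_2=1$ this is automatic from $b_2=2h^{2,0}+h^{1,1}=1$; for $b_2=2$ it follows because $h^{1,1}\ge 1$ (the K\"ahler class) excludes $(h^{2,0},h^{1,1})=(1,0)$. For the converse direction I would reconstruct the ring from a pure diamond: $H^*$ is generated in degree $2$ and is Artinian Gorenstein, and in codimension $\le 2$ such an algebra is automatically a complete intersection. Hence for $b_2\le 2$ the space is $F_0$ and therefore elliptic, the rings obtained being the $\bP^3$ ring $\bQ[\omega]/(\omega^4)$ and the two--generator complete intersection realized by $\bP^1\times\bP^2$.

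The main obstacle is the case $b_2=3$, where linear Hodge theory is not decisive. For the ``only if'' part, the exotic Hodge structure with $h^{2,0}=h^{1,1}=h^{0,2}=1$ produces the complete intersection $\bQ[\alpha,\omega,\bar\alpha]/(\alpha^2,\bar\alpha^2,\omega^2-\alpha\bar\alpha)$, which is still elliptic and fully compatible with both Hard Lefschetz and the Hodge--Riemann relations, so it must be excluded by genuine geometry; for the ``if'' part, a pure diamond (c) only yields a codimension--$3$ Gorenstein ring, which by Buchsbaum--Eisenbud need not be a complete intersection (a triple intersection cubic of non--complete--intersection type, such as the Fermat cubic $X^3+Y^3+Z^3$, would give a \emph{hyperbolic} space). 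I expect the resolution of both points to require threefold geometry rather than homotopy theory: excluding the holomorphic $2$--form (using $\alpha^2=0$, the fact that a rank--$2$ holomorphic form on a threefold degenerates along a foliation, and $h^{1,0}=b_3=0$), and showing that the triple intersection form of a $1$--connected K\"ahler threefold with these minimal Betti numbers is always of complete--intersection type, as it is for the products and projective bundles that exhaust this range. This is where I would concentrate the effort.
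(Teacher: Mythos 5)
Your global strategy is sound and in places slicker than the paper's: formality reduces ellipticity to the cohomology ring; positive Euler characteristic forces an $F_0$--space whose cohomology is a complete intersection on even generators; Hard Lefschetz puts all generators in degree $2$; and the formal--dimension identity $6=\sum\deg f_j-2k$ with $\deg f_j\ge 4$ gives $k\le 3$ and relation degrees $(8)$, $(4,6)$, $(4,4,4)$, hence the three Poincar\'e polynomials. (The paper instead gets $b_2\le 3$ from Corollary \ref{c:b2} and $b_3\le 3$ from the Lusternik--Schnirelmann bound $\dim\pi_{\rm odd}\otimes\bQ\le{\rm cat}_0(X)=3$ of Proposition \ref{p:b3}, then applies Halperin's theorem $e(X)>0\Rightarrow H^{\rm odd}=0$; your step ruling out $\chi(X)=0$ is left vague, but is recoverable exactly this way, since $b_3\le\dim\pi_3\otimes\bQ\le 3<2+2b_2$.) Your observation that for $b_2\le 2$ an Artinian Gorenstein algebra of embedding dimension $\le 2$ generated in degree $2$ is automatically a complete intersection gives the converse in those cases more cheaply than the paper's explicit computation for diamond (b). You also correctly isolate the two points where rational homotopy theory alone cannot decide the issue. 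The difficulty is that you prove neither of them, and they are precisely the substance of the paper's proof.

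First gap: the exotic Hodge structure $h^{2,0}=h^{1,1}=h^{0,2}=1$ (diamond (d) of Corollary \ref{c:diamonds3}). You rightly note that its candidate ring is a complete intersection, so it must be excluded geometrically; but your proposed route via the foliation cut out by the holomorphic $2$--form is only a gesture, and it is not clear it closes. The paper's Proposition \ref{p:ke} is a different and complete argument: $h^{1,1}=1$ forces $c_1(X)=\lambda\omega$; if $\lambda>0$ then $X$ is Fano and $h^{2,0}=0$, a contradiction; if $\lambda=0$ then simple connectivity trivializes $K_X$ and $h^{3,0}=1$, a contradiction; if $\lambda<0$ then $K_X$ is ample, Riemann--Roch gives $\int_X c_1c_2=24\,\chi(\cO_X)=48$, and Miyaoka--Yau forces $\int_X c_1^3\ge 144$, whereas $\int_X c_1^3=\lambda^3\int_X\omega^3<0$. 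Second gap: the converse for diamond (c). Your Fermat--cubic example correctly shows that a Gorenstein algebra with Hilbert function $1,3,3,1$ need not be a complete intersection, so the Betti numbers alone do not suffice; but ``the known examples in this range are of complete--intersection type'' is not a proof. The missing ingredient is exactly what the paper supplies in Lemma \ref{l:2-rels} and the surrounding argument of Proposition \ref{p:diamonds3}: Hard Lefschetz together with the negative definiteness of the Hodge--Riemann pairing on primitive $(1,1)$--classes lets one normalize the three quadratic relations $p_1,p_2,p_3$ so that $(y_1,y_2,y_3)^4\subset(p_1,p_2,p_3)$; hence the radical of the relation ideal is maximal, the $p_i$ form a regular sequence, and the Koszul complex of the pure Sullivan model is acyclic in positive degrees. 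Until you supply arguments at these two points, the proof is incomplete.
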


All examples known to the authors
of 1--connected compact K\"ahler threefolds with elliptic
homotopy type are in fact rationally connected. 
In the special case of homogeneous manifolds, Borel and Remmert 
proved that 1--connectedness implies rationality \cite{BR}. 
These considerations, and the birational classification of rationally 
connected threefolds
by Koll\'ar, Miyaoka, Mori in \cite{KMM}, have motivated us to
classify the $1$--connected Fano threefolds that are
of elliptic homotopy type. This classification is carried out in
Proposition \ref{p:fanos} by
applying Theorem \ref{t:diamonds3}.
Most of the $1$--connected Fano threefolds with
elliptic homotopy type are neither homogeneous spaces nor
fibrations over lower dimensional manifolds with elliptic homotopy type.

The known examples also led to the following  generalization of the earlier
mentioned question whether simply connected fake quadrics exist:

\begin{quest} \label{q:rc3}
Are there 1--connected compact K\"ahler manifolds with elliptic 
homotopy type that are not rationally connected?
\end{quest}

\medskip

Our results are proved by applying the Friedlander--Halperin bounds
(recalled in Theorem \ref{t:fh}) and
the related properties of the rational
homotopy of finite CW--complexes with elliptic homotopy type. 
The rich topological structure of compact K\"ahler manifolds
arising from Hodge theory constrains the subclass of
elliptic homotopy types.

The definition of elliptic homotopy type may be extended from simply 
connected spaces
to nilpotent spaces, but the
homotopy properties become more complex in that context.
The only nilpotent compact K\"ahler manifolds known to the authors, up 
to finite \'etale coverings, are of the form $X \times T$, where 
$X$ is 1--connected and 
$T$ is a complex torus. It is easy to see that $X\times T$ has
elliptic homotopy type if and only if $X$ has it.

\medskip
\noindent
\textbf{Acknowledgements.}\,
We are grateful to
Ingrid Bauer, Najmuddin Fakhruddin, Ignasi Mundet i Riera and Rita 
Pardini for useful discussions.
We wish to thank TIFR and CSIC for hospitality
that made this work possible. 

\section{The elliptic--hyperbolic dichotomy for homotopy types} \label{s:fh}

The homotopy groups of simply connected finite CW--complexes
show a marked dichotomy, established in \cite{FHT0}, which we will recall
(for its proof and wider discussion, see also \cite[\S~33]{FHT}).

\begin{thm} \label{t:eh} {\em (The elliptic--hyperbolic dichotomy.)}
Let $X$ be a finite, 1--connected CW--complex $X$. The homotopy groups 
of $X$ satisfy one of the two mutually exclusive properties:
\begin{enumerate}
\item $\sum_{i \ge 2} \dim \pi_i(X) \otimes_{\mathbb Z} \bQ \,<\, 
\infty$.
\item $\sum_{i=2}^{k} \dim \pi_i(X) \otimes_{\mathbb Z} \bQ \,>\, 
C^k$
for all $k$ large enough, where $C\,>\,1$ and it depends only on $X$.
\end{enumerate}
\end{thm}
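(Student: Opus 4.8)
The plan is to translate the statement into rational homotopy theory, where the dichotomy becomes an assertion about the growth of a graded Lie algebra. First I would pass to the minimal Sullivan model $(\Lambda V, d)$ of $X$, recalling that $\dim V^i \,=\, \dim \pi_i(X) \otimes_{\mathbb Z} \bQ$, so that the quantities appearing in (i) and (ii) are the partial sums of $\dim V^i$. Since $X$ is a finite complex, $H^*(X;\bQ)$ is finite dimensional; I would let $n$ denote its top nonvanishing degree (the formal dimension). I would then introduce the rational homotopy Lie algebra $L$, graded by $L_i \,=\, \pi_{i+1}(X)\otimes_{\mathbb Z}\bQ$, and invoke the Milnor--Moore theorem identifying its universal enveloping algebra $UL$ with $H_*(\Omega X;\bQ)$.

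The two alternatives of the theorem then correspond exactly to $L$ being finite dimensional or infinite dimensional, and their mutual exclusivity is immediate, since a finite total sum cannot exceed $C^k$ for $C\,>\,1$ once $k$ is large. Thus the whole content lies in the implication that if $L$ is infinite dimensional, then $\sum_{i\le k}\dim L_i$ already grows like $C^k$. By the Poincar\'e--Birkhoff--Witt theorem the Poincar\'e series of $UL$ is the product of $(1+t^i)^{\dim L_i}$ over odd $i$ with $(1-t^i)^{-\dim L_i}$ over even $i$, which ties the growth of the ranks to that of the loop-space homology; however, a single large free sub-Lie-algebra can make $\dim H_*(\Omega X;\bQ)$ grow exponentially while only a bounded number of generators appear, so exponential loop homology does not by itself yield exponential ranks, and one is forced to argue on $L$ directly.

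The decisive structural input, which I would import from the F\'elix--Halperin--Thomas theory, is that the finiteness of $X$ forces $UL$ to have finite depth, bounded in terms of the rational Lusternik--Schnirelmann category of $X$ (finite because $X$ is a finite complex, and controlled by the formal dimension $n$). I would then apply the growth dichotomy for graded Lie algebras of finite depth: such an algebra is either finite dimensional or satisfies $\sum_{i\le k}\dim L_i \,>\, C^k$ for some $C\,>\,1$ and all large $k$. Re-indexing $L_i \,=\, \pi_{i+1}(X)\otimes_{\mathbb Z}\bQ$ back to the homotopy groups converts this into alternative (ii) and completes the argument.

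I expect the main obstacle to be precisely this last step: ruling out intermediate, subexponential-but-unbounded growth in the infinite-dimensional case. This is where the hypothesis that $X$ is a \emph{finite} complex is genuinely used — it enters through the finite dimensionality of $H^*(X;\bQ)$ and the resulting finite depth bound — and it is the heart of the results of \cite{FHT0} and \cite[\S~33]{FHT}. The elementary radius-of-convergence estimate on the loop-space Poincar\'e series delivers exponential growth of $\dim H_*(\Omega X;\bQ)$ with relatively little effort, but upgrading this to exponential growth of the homotopy ranks themselves requires the depth machinery.
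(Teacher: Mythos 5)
The paper offers no proof of this theorem: it is quoted from \cite{FHT0}, with \cite[\S~33]{FHT} cited for the proof, and your outline is a faithful summary of exactly that argument --- minimal model, homotopy Lie algebra $L$ with $UL \cong H_*(\Omega X;\bQ)$, depth of $L$ bounded by the rational Lusternik--Schnirelmann category (finite since $X$ is a finite complex), and the growth dichotomy for graded Lie algebras of finite depth. So your proposal matches the approach of the paper's cited source, and correctly isolates the depth machinery as the step that upgrades exponential loop-space homology growth to exponential growth of the homotopy ranks themselves.
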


\begin{defn}\label{defb}
The CW--complex $X$ has {\it elliptic homotopy type} if (i) holds.
The CW--complex $X$ has {\it hyperbolic homotopy type} if (ii) holds.
\end{defn}

If $X$ has elliptic homotopy type, then almost all of its homotopy 
groups are torsion, and the Sullivan's minimal model of $X$ is a 
finitely generated algebra determining the rational homotopy type of the 
space. In contrast, if $X$ has hyperbolic homotopy type then 
the Sullivan's 
minimal model of $X$ is a graded algebra, and the number of 
generators grow exponentially with the degree.

For any field $k$ of characteristic zero, the $k$--homotopy groups of a 
1--connected 
finite CW--complex $X$, and the Sullivan's minimal model encoding the 
$k$--homotopy 
type of $X$, may
be obtained from the $\bQ$--homotopy groups and minimal model by 
the extension
of scalars from $\bQ$ to $k$ (see \cite{Sul}). 
The same property of extension of scalars holds for 
cohomology algebras. So we will choose the coefficient field between
$\bQ$ and $\bR$ according to convenience, and will say 
that
$X$ has {\em elliptic homotopy type} or {\em hyperbolic homotopy type}
without any reference to the base field.

We start by presenting examples 
of manifolds with elliptic homotopy type. The first basic examples are:

\begin{ex} \label{ex:lie}
All 1--connected Lie groups and $H$--spaces of finite type have 
elliptic homotopy type.
\end{ex}

\begin{lem} \label{l:fibration}
Let $X \,\longrightarrow\, B$ be a topologically locally trivial
fibration, with fiber $F$, such that $F$, $X$
and $B$ are all 1--connected finite {\rm CW}--complexes.
If any two of them have elliptic homotopy type, then the third one also
has elliptic homotopy type.
\end{lem}

\begin{proof}
Consider the associated long exact sequence of homotopy groups
$$
\ldots \longrightarrow \pi_d(F) \longrightarrow \pi_d(X) \longrightarrow 
\pi_d(B) 
\longrightarrow \ldots\, .
$$
This exact sequence remains exact after tensoring with $\bQ$. Hence the 
lemma follows.
\end{proof}

This leads to the second set of basic examples.

\begin{ex} \label{ex:homog}
A 1--connected homogeneous manifold $X\,=\,G/H$, 
where $H$
is a closed subgroup of a Lie group $G$, has elliptic homotopy type.
It is not necessary that $G$ and $H$ be 1--connected. Since the action 
of $\pi_1$
on the higher homotopy groups is trivial for Lie groups, the homotopy
exact sequence argument of Lemma \ref{l:fibration} carries through
in this case.
\end{ex}

\begin{ex} \label{ex:bundle}
Examples of compact K\"ahler manifolds with elliptic homotopy type 
provided by Lemma \ref{l:fibration} and 
Example \ref{ex:homog} include:
\begin{enumerate}
\item complex projective spaces $\bP^n_{\bC}=U(n+1)/(U(n) \times S^1)$.
\item complex projective space bundles over a basis $B$ of elliptic 
homotopy type, for instance, Hirzebruch surfaces 
$${\mathbb S}_h\,=\,\bP \left( \cO_{\bP^1_{\bC}} \oplus 
\cO_{\bP^1_{\bC}}(h) \right)\, ,$$
where $h$ is a nonnegative integer.
\end{enumerate}
\end{ex}

We now recall a theorem of Friedlander and Halperin.

\begin{thm} \label{t:fh} {\rm (The Friedlander--Halperin bounds, 
\cite{FH}.)}
Let $X$ be a 1--connected, finite {\rm CW}--complex with elliptic homotopy 
type, 
and let $m$ be the maximal degree $d$ such that $H^d(X; \bQ) \,\neq\, 0$.
Select a basis $\{ x_i \}_{i \in I}$
for the odd--degree homotopy $\pi_{\rm odd}(X) \otimes_{\mathbb Z} 
\bQ$, and also select
a basis $\{ y_j \}_{j \in J}$ for the even--degree homotopy
$\pi_{\rm even}(X) \otimes_{\mathbb Z} \bQ$. Then
\begin{enumerate}
\item $\sum_{i \in I} |x_i| \,\le\, 2m-1$\, ,
\item $\sum_{j \in J} |y_j| \,\le\, m$\, ,
\item $\sum_{i \in I} |x_i| - \sum_{j \in J} (|y_j|-1)\,=\, m$,
\item $\sum_{i \in I} |x_i| - \sum_{j \in J} |y_j|\,\ge\, 0$, and
$e(X) \,\ge\, 0$.
\end{enumerate}
(We have denoted by $|x|$ the degree of each homotopy generator $x$,
while $e(X)$ is the topological Euler characteristic of $X$.)
\end{thm}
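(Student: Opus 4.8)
The plan is to work entirely inside Sullivan's minimal model and reduce the four bounds to a purely algebraic computation. Since $X$ is elliptic, its minimal model $(\Lambda V, d)$ has $V \,=\, V^{\mathrm{even}} \oplus V^{\mathrm{odd}}$ finite-dimensional, and one may take the chosen bases so that $\{y_j\}_{j \in J}$ is a homogeneous basis of $V^{\mathrm{even}}$ and $\{x_i\}_{i \in I}$ a homogeneous basis of $V^{\mathrm{odd}}$, with $|y_j|$ and $|x_i|$ the generator degrees. The ellipticity hypothesis is then exactly that $H^*(\Lambda V, d)$ is finite-dimensional, and $m$ is its top nonvanishing degree, i.e.\ the formal dimension. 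Set $p = |I|$ and $q = |J|$.

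First I would replace $(\Lambda V, d)$ by its associated pure model $(\Lambda V, d_\sigma)$, where $d_\sigma$ vanishes on $V^{\mathrm{even}}$ and is the component of $d$ landing in $\Lambda V^{\mathrm{even}}$ when restricted to $V^{\mathrm{odd}}$. Filtering $\Lambda V$ by word-length in $V^{\mathrm{even}}$ produces a spectral sequence whose first page computes the pure model; the structure theory of elliptic Sullivan algebras \cite{FHT} then gives that $(\Lambda V, d_\sigma)$ is again elliptic, with the same formal dimension $m$ and the same homotopy Euler characteristic $\chi_\pi \,=\, q - p$. This reduction is the technical heart of the argument, and the place where finiteness of $H^*$ must be propagated carefully through the filtration.

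In the pure model the even subalgebra $\Lambda V^{\mathrm{even}}$ is the polynomial ring $\bQ[y_1, \ldots, y_q]$, and $d_\sigma x_i \,=\, P_i \in \bQ[y_1,\ldots,y_q]$ with $|P_i| = |x_i| + 1$; minimality forces each $P_i$ to lie in the square of the augmentation ideal. Ellipticity means the quotient ring is finite-dimensional, which forces $p \ge q$ and allows one to extract a regular sequence $P_{i_1}, \ldots, P_{i_q}$. The resulting complete intersection $A = \bQ[y_1,\ldots,y_q]/(P_{i_1},\ldots,P_{i_q})$ is an Artinian Poincar\'e duality algebra concentrated in even degrees, and the surviving $p - q$ odd generators contribute an exterior factor. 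Reading off the socle degree of $A$ as $\sum_k |P_{i_k}| - \sum_j |y_j|$ and adding the degrees of the surviving odd generators yields the formal dimension formula (iii), namely $m = \sum_{i}|x_i| - \sum_j(|y_j|-1)$.

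The remaining inequalities then follow formally. Bound (ii), $\sum_j |y_j| \le m$, comes from combining Poincar\'e duality of $A$ with the minimality constraint that each $P_i$ lies in the square of the augmentation ideal, which after ordering the degrees forces $|P_{i_k}| \ge 2|y_k|$ and hence $m \ge \sum_j |y_j|$. Granting (ii) and (iii), bound (i) is immediate, since $\sum_i |x_i| = m + \sum_j(|y_j|-1) \le m + (m - q) \le 2m - 1$ when $q \ge 1$, the case $q=0$ being trivial; likewise the first half of (iv), $\sum_i|x_i| - \sum_j|y_j| = m - q \ge 0$, follows from $q \le \sum_j|y_j| \le m$. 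Finally $e(X) \ge 0$ is read off the pure model: when $p = q$ the cohomology is the complete intersection $A$, concentrated in even degrees, so $e(X) = \dim_{\bQ} A > 0$, whereas when $p > q$ a surviving odd generator forces $e(X) = 0$. The main obstacle throughout is the passage to the pure model together with the proof that the cohomology of an elliptic space satisfies Poincar\'e duality; once these structural inputs are in place, the four displayed bounds are extracted by elementary degree bookkeeping.
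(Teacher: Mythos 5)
First, a remark on the comparison itself: the paper contains \emph{no proof} of Theorem \ref{t:fh}. It is a recalled result, with items (i)--(iii) attributed to \cite{FH}, item (iv) to \cite{H}, and \cite[\S~32]{FHT} cited for a complete account. So your proposal can only be measured against that cited proof, and at the level of strategy it does follow the same route: pass to the associated pure model $(\Lambda V, d_\sigma)$ via the word-length filtration in $V^{\mathrm{even}}$ (this preserves ellipticity, formal dimension and $\chi_\pi$, as in \cite[\S~32]{FHT}), then do commutative algebra on the resulting Koszul complex.

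The execution inside the pure model, however, has genuine gaps, and they sit exactly at the steps that carry the real content of the theorem. First, one cannot in general extract a regular sequence as a \emph{subset} of the $P_i$: take $q=2$, $p=3$, with $|y_1|=|y_2|=2$ and $P_1=y_1y_2$, $P_2=y_2(y_1+y_2)$, $P_3=y_1(y_1+y_2)$. The ideal $(P_1,P_2,P_3)$ contains $y_1^2=P_3-P_1$ and $y_2^2=P_2-P_1$, hence has finite colength (so this pure algebra is elliptic), but any two of the three generators share a linear factor, so \emph{no} pair of them is a regular sequence. Regular sequences exist inside the ideal, but not among the generators and not in the degrees $|x_i|+1$, which is what your degree bookkeeping requires. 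Second, even when a subset $P_{i_1},\dots,P_{i_q}$ is a regular sequence, the cohomology of the pure model is \emph{not} $A\otimes\wedge(\text{surviving } x_i)$: with $dx_1=y_1^2$, $dx_2=y_2^2$, $dx_3=y_1y_2$, the image of $dx_3$ is nonzero in $A=\bQ[y_1,y_2]/(y_1^2,y_2^2)$, and the cohomology has dimension $6$, not $8=\dim\bigl(A\otimes\wedge(x_3)\bigr)$. In general the cohomology is the full Koszul homology, spread over several exterior degrees, so neither the formal dimension formula (iii) nor your Euler characteristic dichotomy ($e=\dim A>0$ when $p=q$, $e=0$ when $p>q$) can be read off a socle degree; the statement $e(X)\ge 0$ is Halperin's theorem \cite{H} and needs its own argument.

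Third, and most seriously, the matching ``after ordering, $|P_{i_k}|\ge 2|y_k|$'' that you invoke for bound (ii) is precisely the arithmetic heart of Friedlander--Halperin, not a consequence of minimality plus Poincar\'e duality of $A$. Minimality only yields $|P_i|\ge 2\min_j |y_j|$. What actually forces the matching is finite colength: for the coordinate subspace spanned by the $y_j$ of largest degrees, setting the remaining variables to zero must again give an ideal of finite colength in the smaller polynomial ring, so at least as many $P_i$ as remaining variables have nonzero restriction, and each such restriction lies in the square of the smaller augmentation ideal, hence has degree at least twice the smallest degree among those variables; a greedy (Hall-type) selection over the nested family of such subspaces then produces the matching. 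Since you assume this step rather than prove it, and since (i) and the first half of (iv) are derived from (ii) and (iii), the proposal as written establishes the correct framework but none of the four bounds.
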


The above inequality (iv) was proved in \cite{H}. The rest were 
originally established in
\cite{FH}. See \cite[\S~32]{FHT} for a complete proof of it and related
results.

The real homotopy groups of a manifold may be determined by computing the
Sullivan
minimal model of its commutative differential graded algebra ({\em cdga})
of global smooth differential forms. We note that this computation is 
easier for a closed
K\"ahler manifold because such manifolds are formal
(see \cite{DGMS}),
and their Sullivan minimal model is that of the cohomology algebra.
The Friedlander--Halperin bounds in Theorem \ref{t:fh} and the initial steps
in the computation of the Sullivan's minimal model immediately impose 
some bounds on the Betti numbers of manifolds with elliptic 
homotopy type.

\begin{cor} \label{c:b2}
Let $X$ be a 1--connected finite {\rm CW}--complex of elliptic homotopy 
type.
Then, $b_2(X) \,\le\, m/2$, where $m\,=\, \dim X$.
\end{cor}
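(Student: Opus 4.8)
The plan is to use the Friedlander-Halperin bounds together with the elementary structure of Sullivan's minimal model in low degrees. I would work with coefficients in $\bR$ so that the real homotopy groups are computed directly from the minimal model of the cohomology algebra (using formality of K\"ahler manifolds, though for a general finite CW-complex this is not needed — only the elliptic hypothesis is). The key observation is that $b_2(X) = \dim H^2(X;\bR)$ equals the number of degree-$2$ generators in the minimal model of $X$, since $X$ is $1$-connected and the minimal model has no differential in the lowest positive degree where generators appear. Each such generator is a nonzero class in $\pi_2(X) \otimes \bR$, which lies in the even-degree homotopy $\pi_{\rm even}(X) \otimes \bR$.

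First I would identify $b_2(X)$ with the number of degree-$2$ generators $y_j$ of the minimal model. Concretely, in a $1$-connected minimal model $(\Lambda V, d)$, the space $V^2$ of degree-$2$ generators injects into $H^2$ because $d$ vanishes on $V^2$ (there is nothing in degree $1$ for $dy$ to land on, as $V^1 = 0$), and conversely $V^2$ surjects onto $H^2$ since decomposable elements of $\Lambda V$ start in degree $4$. Hence $\dim V^2 = b_2(X)$, and each basis element of $V^2$ is one of the even-degree generators $y_j$ with $|y_j| = 2$. This means the index set $J$ contains at least $b_2(X)$ elements of degree exactly $2$.

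Next I would apply bound (ii) of Theorem \ref{t:fh}, namely $\sum_{j \in J} |y_j| \le m$ with $m = \dim X$. Since each of the $b_2(X)$ degree-$2$ generators contributes $|y_j| = 2$ to this sum, and all terms are positive, we get
\[
2\, b_2(X) \,\le\, \sum_{j \in J} |y_j| \,\le\, m\, ,
\]
which immediately yields $b_2(X) \le m/2$.

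I do not expect a serious obstacle here; the statement is essentially a direct reading of the Friedlander-Halperin inequality once the identification $\dim V^2 = b_2(X)$ is in place. The one point requiring minor care is justifying that the degree-$2$ homotopy generators are genuinely even-degree generators counted in bound (ii) and that their degrees are exactly $2$ (not that the bound is being applied to a set that might omit them); this is handled by the injectivity-surjectivity argument for $V^2 \to H^2$ above. A small caveat worth noting is that $m$ as used in Theorem \ref{t:fh} is the top nonvanishing cohomological degree, which for a compact manifold coincides with $\dim X$; the corollary states $m = \dim X$, so these agree and the bound $b_2(X) \le m/2$ follows.
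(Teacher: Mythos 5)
Your argument is correct and is essentially the paper's proof: the paper identifies the degree--$2$ homotopy generators with $H_2(X;\bQ)$ via the Hurewicz isomorphism (rather than via the minimal--model identification $V^2\cong H^2$, which amounts to the same thing for a $1$--connected space), and then concludes exactly as you do from $2\,b_2(X)\le \sum_{j\in J}|y_j|\le m$ using bound (ii) of Theorem \ref{t:fh}.
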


\begin{proof}
As $X$ is simply connected, by Hurewicz's theorem, there is an isomorphism
$\pi_2(X) \cong H_2(X;\bZ)$. Therefore, it follows from 
inequality (ii) in Theorem \ref{t:fh} that
$$
2 b_2(X) = \sum_{y_j \in \text{ basis of } \\ 
\pi_2(X)\otimes_{\mathbb Z} 
\bQ} |y_j| 
\,\le\, \sum_{j \in J} |y_j| \,\le\, m
$$
completing the proof.
\end{proof}

Likewise, the following bound on the third Betti number
of any 1--connected finite CW--complex $X$ of elliptic homotopy type
can be deduced from Theorem \ref{t:fh}:
$$
b_3(X)+ \dim \ker \left(S^2 H^2(X;\bQ) \stackrel{\cup}{\longrightarrow}
H^4(X;\bQ) \right)\,\le\, (2 \dim X-1)/3
$$
($S^j$ is the $j$--th symmetric product).
We will prove a sharper bound for $b_3$ of closed symplectic manifolds
with elliptic homotopy type (see Proposition \ref{p:b3}).
For that purpose, we will need another homotopical invariant of CW--complexes.

\begin{defn}
Let $X$ be a connected finite CW--complex. 

The {\em Lusternik--Schnirelmann category}
of $X$, denoted ${\rm cat}(X)$, is the least integer $m$ such that 
$X$ can be covered by $m+1$ open subsets each contractible in $X$.

The {\em rational Lusternik--Schnirelmann category} of $X$, denoted 
${\rm cat}_0(X)$, is the least integer $m$ such that there exists $Y$
rationally homotopy equivalent to $X$ with ${\rm cat}(Y)\,=\,m$.
\end{defn}

Some properties of the Lusternik--Schnirelmann category are listed below
(see Proposition 27.5, Proposition 27.14 and \S~28 in 
\cite{FHT} for their proof). 

\begin{prop} \label{p:propcat}
Let $X$ be a connected finite {\rm CW}--complex.
\begin{enumerate}
\item The inequality ${\rm cat}_0(X) \le {\rm cat}(X)$ holds.
\item For any $r$--connected {\rm CW}--complex $X$ of dimension
$m$ ($r\, \geq\, 0$), the inequality ${\rm cat}(X) 
\,\le\, m/(r+1)$ holds.

\item The inequality $${\rm cup}\mbox{--}{\rm length}\,(X) \,\le\,
{\rm cat}_0(X)$$
holds, where ${\rm cup}\mbox{--}{\rm length}\,(X)$ is the
largest integer $p$ such that there exists a product $\alpha_1 \cup 
\ldots \cup \alpha_p \,\neq\, 0$ with $\alpha_i\,\in\,
\oplus_{j>0} H^j(X;\bQ)$.
\end{enumerate}
\end{prop}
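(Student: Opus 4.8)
The plan is to establish the three assertions separately, as they draw on rather different inputs: assertion (i) is formal, assertion (iii) is the classical cup--length estimate together with rational invariance, and only assertion (ii) requires real homotopy--theoretic content.

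For (i) I would argue directly from the definition of ${\rm cat}_0$. Since $X$ is rationally homotopy equivalent to itself, the space $X$ is an admissible competitor in the definition of ${\rm cat}_0(X)$ as the minimum of ${\rm cat}(Y)$ over all $Y$ rationally equivalent to $X$; hence ${\rm cat}_0(X)\,\le\,{\rm cat}(X)$.

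For (iii) I would first recall the classical inequality ${\rm cup}\mbox{--}{\rm length}(X)\,\le\,{\rm cat}(X)$, valid for any space. If $X\,=\,U_0\cup\cdots\cup U_p$ with $p\,=\,{\rm cat}(X)$ and each $U_i$ contractible in $X$, then each inclusion $U_i\hookrightarrow X$ is nullhomotopic, so every positive--degree class $\alpha_i\in H^{>0}(X;\bQ)$ lifts to the relative group $H^*(X,U_i;\bQ)$. The relative cup product then lands in $H^*(X,U_0\cup\cdots\cup U_p;\bQ)\,=\,H^*(X,X;\bQ)\,=\,0$, so any product of $p+1$ positive--degree classes vanishes, giving ${\rm cup}\mbox{--}{\rm length}(X)\,\le\,p$. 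To pass to ${\rm cat}_0$ I would use that the rational cup--length depends only on the rational cohomology algebra, hence is unchanged under rational homotopy equivalence: for every $Y$ rationally equivalent to $X$ one has ${\rm cup}\mbox{--}{\rm length}(X)\,=\,{\rm cup}\mbox{--}{\rm length}(Y)\,\le\,{\rm cat}(Y)$, and taking the minimum over such $Y$ yields ${\rm cup}\mbox{--}{\rm length}(X)\,\le\,{\rm cat}_0(X)$.

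The main obstacle is (ii). Here I would use the Ganea characterization: after replacing $X$ by a homotopy--equivalent minimal CW--model with cells only in degrees $r+1,\ldots,m$, one has ${\rm cat}(X)\,\le\,k$ precisely when the $k$--th Ganea fibration $p_k\colon G_k(X)\to X$ admits a section. Its fiber is the $(k+1)$--fold join $(\Omega X)^{\ast(k+1)}$; since $X$ is $r$--connected the loop space $\Omega X$ is $(r-1)$--connected, and using that the join of a $p$--connected and a $q$--connected space is $(p+q+2)$--connected, this fiber is $((k+1)(r+1)-2)$--connected. The obstructions to sectioning $p_k$ over the $m$--skeleton lie in $H^{i+1}(X;\pi_i(\text{fiber}))$, which vanish once $i+1\,\le\,(k+1)(r+1)-1$, i.e.\ once $m\,\le\,(k+1)(r+1)-1$. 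Choosing $k\,=\,\lfloor m/(r+1)\rfloor$ makes this inequality hold (write $m\,=\,q(r+1)+s$ with $0\le s\le r$), so a section exists and ${\rm cat}(X)\,\le\,\lfloor m/(r+1)\rfloor\,\le\,m/(r+1)$. The genuine content lies in the connectivity estimate for the Ganea fiber and the ensuing obstruction--theoretic lifting; the arithmetic bookkeeping at the end is routine.
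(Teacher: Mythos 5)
Your proposal is correct. Note, however, that the paper offers no proof of this proposition at all: it simply cites Propositions 27.5, 27.14 and \S~28 of \cite{FHT}, so any honest argument is "different from the paper's". Your treatments of (i) and (iii) are the standard ones and essentially coincide with what is in \cite{FHT}: (i) is immediate since $X$ competes in the minimum defining ${\rm cat}_0(X)$, and (iii) is the classical relative--cup--product vanishing $H^*(X,U_0)\otimes\cdots\otimes H^*(X,U_p)\to H^*(X,X)=0$ combined with the fact that a rational homotopy equivalence induces an isomorphism of rational cohomology algebras, so the cup--length bound transfers from each competitor $Y$ to $X$. For (ii) you genuinely diverge from the cited source: \cite{FHT} proves ${\rm cat}(X)\le \dim X/(r+1)$ by exhibiting an explicit open cover of a CW--model adapted to the cell structure (grouping cells into $\lfloor m/(r+1)\rfloor+1$ layers, each contractible in $X$), whereas you use the Ganea characterization plus the connectivity $((k+1)(r+1)-2)$ of the join fiber $(\Omega X)^{\ast(k+1)}$ and obstruction theory. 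Your connectivity count and the arithmetic $m=q(r+1)+s$, $0\le s\le r$, are right, and the argument is sound; the trade--off is that the Ganea route presupposes the nontrivial equivalence between ${\rm cat}$ and sectioning $p_k$, and for $r=0$ the obstruction groups must be read with local coefficients (harmless here, since they vanish either because $\pi_i$ of the fiber is zero or because $i+1>\dim X$), while the covering argument is more elementary and needs neither. Two small remarks: the preliminary replacement of $X$ by a minimal CW--model is not actually needed in your argument (only $\dim X\le m$ and the $r$--connectivity of $X$ enter), and for the record the normalized convention ${\rm cat}({\rm point})=0$ used in the paper is the one for which the Ganea criterion is stated.
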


Using Proposition \ref{p:propcat}, we get the following property
of symplectic manifolds (an equivalent version of it is
proved in \cite{TO}).

\begin{lem} \label{l:cat}
Let $(X\, , \omega)$ be a 1--connected compact symplectic manifold
with $\dim X\,=\,2n$. Then,
$$
{\rm cat}_0(X)\,=\,{\rm cat}(X)\,=\,n\, .
$$
\end{lem}

\begin{proof}
We may perturb the original symplectic form
$\omega$ to replace it by a symplectic form $\widetilde \omega$
on $X$ arbitrarily close to $\omega$ such that
$[\widetilde \omega] \,\in\, H^2(X; \bQ)$.

The inequalities in Proposition \ref{p:propcat} yield
$$
n \,\le\, {\rm cup}\mbox{--}{\rm length}\,(X) \,\le\,
{\rm cat}_0(X) \,\le\, {\rm cat}(X)
\,\le\, \frac{2n}{2}\,=\,n
$$
completing the proof.
\end{proof}

We now recall another property of CW--complexes that is a natural 
continuation of Theorem \ref{t:fh} (see \cite[\S~32]{FHT}):

\begin{prop} \label{p:odd} {\rm (Friedlander--Halperin.)}
If $X$ is a 1--connected finite {\rm CW}--complex with elliptic homotopy 
type, then
$$
\dim \pi_{\rm odd}(X) \otimes_{\mathbb Z} \bQ \,\le\, {\rm 
cat}_0(X)\, .
$$
\end{prop}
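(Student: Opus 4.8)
The plan is to prove that $\dim \pi_{\mathrm{odd}}(X) \otimes_{\mathbb Z} \bQ \le \mathrm{cat}_0(X)$ by working with Sullivan's minimal model of $X$, where the odd-degree generators correspond exactly to a basis of $\pi_{\mathrm{odd}}(X)\otimes_{\mathbb Z}\bQ$. Recall that since $X$ has elliptic homotopy type, its minimal model $(\Lambda V, d)$ has finite-dimensional $V$, with $V^{\mathrm{odd}}$ dual to $\pi_{\mathrm{odd}}(X)\otimes_{\mathbb Z}\bQ$ and $V^{\mathrm{even}}$ dual to $\pi_{\mathrm{even}}(X)\otimes_{\mathbb Z}\bQ$. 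So the statement is equivalent to $\dim V^{\mathrm{odd}} \le \mathrm{cat}_0(X)$.

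The key tool is the algebraic model of rational Lusternik--Schnirelmann category due to F\'elix--Halperin. Concretely, $\mathrm{cat}_0(X)$ equals the Toomer invariant-type length: one forms the quotient projection $(\Lambda V, d) \to (\Lambda V / \Lambda^{>k} V, \bar d)$, and $\mathrm{cat}_0(X)$ is the least $k$ for which this surjection admits a homotopy retraction (equivalently, the least $k$ such that the model retracts off its truncation at wedge-length $k$). First I would recall this characterization, which is precisely the content invoked from \cite[\S~27\text{--}29]{FHT}. The strategy is then to produce, from the odd generators, a nonzero product in the appropriate quotient that forces $k$ to be at least $\dim V^{\mathrm{odd}}$.

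The heart of the argument is the following standard fact for elliptic spaces: if $x_1, \ldots, x_r$ is a basis of $V^{\mathrm{odd}}$, then the product $x_1 \cdots x_r$ represents a nonzero cohomology class in $H^*(\Lambda V, d)$, and more precisely it survives in the top of the minimal model in a way that cannot be reached by the truncation at wedge-length smaller than $r$. This is closely tied to the fact that for elliptic spaces the fundamental class is detected by the product of all odd generators: the formal dimension $m$ satisfies $m = \sum |x_i| - \sum(|y_j|-1)$, which is exactly inequality (iii) of Theorem \ref{t:fh}, and the top class is represented by $x_1\cdots x_r$ times a suitable polynomial in the even generators. Hence any retraction of the truncation $(\Lambda V/\Lambda^{>k}V)$ that captures the top cohomology must allow wedge-length at least $r = \dim V^{\mathrm{odd}}$, giving $\mathrm{cat}_0(X) \ge r$.

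The main obstacle I anticipate is making the detection step rigorous: one must verify that the product of all odd generators genuinely contributes to $\mathrm{cat}_0$ and is not killed in a lower truncation by the differential coming from the even generators (whose differentials feed back into $\Lambda^{\ge 2}V$). The clean way to handle this is to invoke the theorem of F\'elix, Halperin, and Thomas that for a \emph{pure} elliptic model the Toomer invariant equals the number of odd generators, and then to reduce the general elliptic case to the pure case via the associated pure (bigraded) model, using that the truncation length is preserved. Rather than reprove these structural results, I would cite \cite[\S~32]{FHT}, where the inequality $\dim \pi_{\mathrm{odd}}(X)\otimes \bQ \le \mathrm{cat}_0(X)$ is established as part of the systematic study of elliptic spaces, and present the above as the conceptual outline explaining why the bound holds.
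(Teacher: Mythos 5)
The paper offers no proof of this proposition: it is stated as a recalled result of Friedlander--Halperin with a pointer to \cite[\S~32]{FHT}, which is exactly where your proposal also ends up. Your conceptual outline (Toomer-type detection of the top class by the product of the odd generators, reduction to the associated pure model) is a faithful sketch of the argument in that reference, so the proposal is correct and takes essentially the same approach as the paper.
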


The following is an immediate
consequence of Proposition \ref{p:odd} and Lemma \ref{l:cat}.

\begin{cor} \label{c:oddsympl}
Let $X$ be a 1--connected compact symplectic manifold with
elliptic homotopy type. Then $\dim \pi_{\rm odd}(X) \otimes_{\mathbb 
Z} \bQ \,\le\, \frac{\dim X}{2}$.
\end{cor}

Corollary \ref{c:oddsympl} leads to the following bound on $b_3$ of 
K\"ahler and, more generally, symplectic manifolds.

\begin{prop} \label{p:b3}
Let $X$ be a 1--connected compact symplectic manifold of
dimension $2n$, and let $r$ be the dimension of the 
kernel of
the cup product map
$$\cup \,:\, S^2 H^2(X; \bQ) \,\longrightarrow\, H^4(X; \bQ)\, .$$
Assume that $X$ has elliptic homotopy type. Then $b_3(X) \,\le\, 
n-r$.
\end{prop}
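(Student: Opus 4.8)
The plan is to read off both $b_3(X)$ and the integer $r$ from the low--degree part of the Sullivan minimal model $(\Lambda V, d)$ of $X$, and then to bound the relevant piece of $V$ by Corollary \ref{c:oddsympl}. Since $X$ is $1$--connected we have $V \,=\, \bigoplus_{k \ge 2} V^k$ with $V^k \,\cong\, (\pi_k(X) \otimes_{\bZ} \bQ)^{*}$, and minimality means that $d$ is decomposable, i.e.\ $d(V) \,\subset\, \Lambda^{\ge 2} V$. As the lowest--degree generators sit in degree $2$, the only decomposable element of degree $3$ is zero, so $d|_{V^2} \,=\, 0$; and the decomposable elements of degree $4$ are exactly $S^2 V^2$, so $d$ restricts to a map
$$
d \,:\, V^3 \,\longrightarrow\, S^2 V^2\, .
$$

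First I would compute the relevant cohomology directly from this model. In degree $3$ there are neither decomposable classes nor incoming differentials, so $H^3(X;\bQ) \,\cong\, \ker(d|_{V^3})$ and hence $b_3(X) \,=\, \dim \ker(d|_{V^3})$. In degree $2$ one has $H^2(X;\bQ) \,\cong\, V^2$, so $S^2 H^2(X;\bQ) \,\cong\, S^2 V^2$, and under this identification the cup product map is induced by the inclusion $S^2 V^2 \hookrightarrow (\Lambda V)^4$ of closed decomposable $4$--classes followed by passage to cohomology. A class in $S^2 V^2$ therefore dies in $H^4$ precisely when it is a coboundary, and since the only degree--$3$ elements are those of $V^3$, the space of such coboundaries is exactly $d(V^3) \subset S^2 V^2$. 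Consequently
$$
r \,=\, \dim \ker\!\big(S^2 H^2 \stackrel{\cup}{\longrightarrow} H^4\big) \,=\, \dim d(V^3) \,=\, \dim V^3 - \dim \ker(d|_{V^3}) \,=\, \dim V^3 - b_3(X)\, .
$$

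It then remains only to bound $\dim V^3$. Since degree $3$ is odd, $\dim V^3 \,=\, \dim \pi_3(X) \otimes_{\bZ} \bQ \,\le\, \dim \pi_{\rm odd}(X) \otimes_{\bZ} \bQ$, and Corollary \ref{c:oddsympl} gives $\dim \pi_{\rm odd}(X) \otimes_{\bZ} \bQ \,\le\, n$. Combining this with the displayed identity yields $b_3(X) \,=\, \dim V^3 - r \,\le\, n - r$, as claimed. I expect the crux to be the second step, namely the identification $r \,=\, \dim d(V^3)$: one must verify that the elliptic hypothesis plays no role in these purely formal identities relating $b_3$, $r$ and $V^3$ to the minimal model (they hold for any $1$--connected finite complex), so that ellipticity is invoked only through Corollary \ref{c:oddsympl} to bound $\dim V^3$. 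One should also confirm that the degree--$4$ generators $V^4$ do not interfere with the identification of $\ker(\cup)$, which is precisely why one restricts attention to the decomposable part $S^2 V^2$ of the $4$--cocycles.
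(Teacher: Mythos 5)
Your argument is correct and is essentially the paper's proof: the identity $\dim \pi_3(X)\otimes_{\bZ}\bQ \,=\, b_3(X) + r$ that you extract from the low--degree part of the minimal model is exactly the paper's isomorphism \eqref{n2} (cited there from Griffiths--Morgan), and both proofs then conclude by bounding $\dim \pi_3(X)\otimes_{\bZ}\bQ$ via Corollary \ref{c:oddsympl}. The only difference is that you derive that isomorphism from minimality directly rather than quoting it, which is a fine (and correct) elaboration.
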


\begin{proof}
Lemma \ref{l:cat} implies that ${\rm cat}_0(X)\,=\,{\rm cat}(X)\,=
\,n$. Therefore, by Corollary \ref{c:oddsympl},
\begin{equation}\label{n1}
\dim \pi_3(X) \otimes_{\mathbb Z} \bQ \,\le\, \dim \pi_{\rm odd} 
(X) \otimes_{\mathbb Z} \bQ\,\le\, n\, .
\end{equation}

The second stage in the computation of the Sullivan's minimal model for
$X$ by induction on cohomology degree (see \cite[Ch. IX]{GM}) shows that 
\begin{equation}\label{n2}
\text{Hom}\,(\pi_3(X), \bQ) \,\cong \,H^3(X; \bQ) \oplus {\rm kernel}\,
\left( S^2 H^2(X; \bQ) \,\stackrel{\cup}{\longrightarrow}\, H^4(X; \bQ) 
\right)\, .
\end{equation}
The proposition follows from \eqref{n1} and \eqref{n2}.
\end{proof}

\section{Compact complex surfaces with elliptic homotopy type} 
\label{s:surfaces}

In this section, 1--connected compact complex surfaces with elliptic homotopy
type are investigated.

\noindent {\bf Theorem \ref{t:ell2}.} {\em
A 1--connected compact complex analytic surface has elliptic homotopy 
type 
if and only if it belongs to the following list:
\begin{enumerate}
\item the complex projective plane $\bP^2_{\bC}$,
\item Hirzebruch surfaces ${\mathbb S}_h\,=\,\bP_{\bP^1_{\bC}}(\cO 
\oplus \cO(h))$, where $h \,\ge\, 0$, and
\item 1--connected surfaces of general type $X$ with $q(X)\,=\,p_g(X)\,=\,0$,
$K_X^2\,=\,8$
and $c_2(X)=4$.
\end{enumerate}
}

Before proving the theorem, we make some remarks on its statement.

\begin{rem}\label{rem.FQ}
Projective surfaces $X$ of general type with $q(X)\,=\,p_g(X)\,=\,0$,
$K_X^2\,=\,8$ and $c_2(X)=4$
are commonly referred to as {\em fake quadrics}. Hirzebruch 
asked whether 1--connected fake quadrics exist. This question
remains open. By Freedman's theorem (see \cite[III, \S~2]{Kir}), any
simply connected fake
quadric is either homeomorphic to the Hirzebruch surface ${\mathbb S}_1$ 
or
to the quadric ${\mathbb S}_0\,=\, \bP^1_{\bC} \times \bP^1_{\bC}$.

The bicanonical map $\Phi_{|2K|}$ of a fake quadric must be of
degree 1 or 2. All fake quadrics with $\deg \Phi_{|2K|}\,=\,2$
have been classified by M. Mendes Lopes
and R. Pardini (see \cite{MLP}),
and each one of them has nontrivial fundamental group.
Many fake quadrics
with bicanonical map of degree one have been found by
Bauer, Catanese, Grunewald and Pignatelli \cite{BCGP}. They
are all uniformized by the bidisk, and have infinite fundamental group. 
\end{rem}

Now we will prove Theorem \ref{t:ell2}.

\begin{proof}
All 1--connected complex analytic surfaces admit
K\"ahler metrics (see \cite[Ch. VI.1]{BPV}). Let $X$ be
a $1$--connected  K\"ahler surface.

The projective
plane and the Hirzebruch surfaces have elliptic homotopy type
(see Example \ref{ex:bundle}). As we noted
in Remark \ref{rem.FQ}, any simply connected fake quadric
is homeomorphic to a Hirzebruch surface.
So if they exist, then they will have elliptic homotopy type as well.

To check that there are no other
$1$--connected K\"ahler surfaces with elliptic homotopy
type, consider the
Hodge numbers of any simply connected K\"ahler surface $X$
with elliptic homotopy type. As $X$ is 1--connected, we 
have $H^1(X;{\mathbb Q})\,=\,0$; so $H^3(X;{\mathbb Q})\,=\,0$ by 
Poincar\'e duality. By Corollary
\ref{c:b2} we know that $b_2(X) \,\le\, 2$. As $h^{1,1}(X) \,\ge\, 1$ and
$h^{2,0}(X)\,=\,h^{0,2}(X)$,
the only possibilities for the Hodge diamond of $X$ are
$$
\text{(a)} \quad 
\begin{array}{ccccc}
   &   & 1 &   &   \\
   & 0 &   & 0 &   \\
 0 &   & 1 &   & 0 \\
   & 0 &   & 0 &   \\
   &   & 1 &   &   \end{array}
\qquad \text{(b)} \quad 
\begin{array}{ccccc}
   &   & 1 &   &   \\
   & 0 &   & 0 &   \\
 0 &   & 2 &   & 0 \\
   & 0 &   & 0 &   \\
   &   & 1 &   &   \end{array}
$$
If $X$ has the Hodge diamond (a), then using the condition that $X$
is simply connected, a theorem of Yau 
implies that $X$ is the complex projective plane
(see \cite[\S~5, Theorem 1.1]{BPV}).

Let $X$ be a simply connected K\"ahler surface possessing the
Hodge diamond (b). Therefore,
\begin{equation}\label{isi1}
\chi(\cO_X)\,=\, 1-q(X)+p_g(X)\,=\,1\,~\, \text{~and~}\, ~\,
c_2(X)\,=\,2-4q(X)+b_2(X)\,=\,4\, .
\end{equation}
Also, we have
\begin{equation}\label{isi2}
c_1(X)^2\,=\,12\cdot\chi(\cO_X) -c_2(X)\,=\,8\, .
\end{equation}

Next, we go over the Kodaira--Enriques classification of surfaces,
ordered by the Kodaira dimension $\kappa(X)$.

$\kappa(X)\,=\, -\infty$:\, Simply connected surfaces in this class are 
rational. Since $h^{1,1}(X)\,=\,2$, there is a nonnegative integer
$h$ such that $X$ is isomorphic to the Hirzebruch surface ${\mathbb 
S}_h$. 

{}From the earlier mentioned theorem of Yau (see \cite[\S~5, Theorem 1.1]{BPV})
if follows that 
the only way for a simply connected surface with Hodge diamond (b) 
to be nonminimal
is to be the blow--up of $\bP^2_{\bC}$ at one point. 
Therefore, if
$\kappa(X) \,\ge\, 0$, we need to examine only the minimal surfaces.

$\kappa(X)\,=\,0$:\, The only 1--connected minimal surfaces in this class
are the K3 surfaces, in which case $c_1^2(X)\,=\, 0$, hence
\eqref{isi2} is contradicted.

$\kappa(X)\,=\,1$:\, In this case $X$ is an elliptic fibration, minimal and
1--connected, with $c_1^2(X)\,=\,0$ (see \cite[Proposition IX.2]{Bea}); this
again contradicts \eqref{isi2}.

$\kappa(X)\,=\,2$:\, This is the case of simply connected fake quadrics. 
As explained in Remark \ref{rem.FQ},
it is currently unknown whether they exist. This completes the
proof of the theorem.
\end{proof}

\section{Compact K\"ahler threefolds with elliptic homotopy type} 
\label{s:solids}

Just as in the case of surfaces, we start by finding out the Hodge
diamonds of compact K\"ahler threefolds with elliptic homotopy
type. From Corollary \ref{c:b2} we know that $b_2(X) \,\le\, 3$, 
while Proposition \ref{p:b3} implies that
$b_3(X) \,\le\, 3$. In fact, a stronger statement holds as
shown by the following proposition.

\begin{prop} \label{p:b3nul}
Let $(X\, ,\omega)$ be a compact 1--connected symplectic 
six dimensional
manifold with elliptic homotopy type. Then $b_3(X)\,=\,0$.
\end{prop}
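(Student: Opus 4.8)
The plan is to combine the constraints on a six-dimensional symplectic manifold of elliptic homotopy type coming from the earlier results, and then extract a bound on $b_3$ that forces it to vanish. By Lemma \ref{l:cat}, we have ${\rm cat}_0(X)\,=\,n\,=\,3$, and by Proposition \ref{p:b3} we already know
$$
b_3(X)\,\le\, 3-r\, ,
$$
where $r\,=\,\dim\ker\left(S^2 H^2(X;\bQ)\stackrel{\cup}{\longrightarrow}H^4(X;\bQ)\right)$. So the whole problem reduces to showing that the right-hand side is forced down to $0$. My first step is therefore to pin down $b_2(X)$: Corollary \ref{c:b2} gives $b_2(X)\,\le\, m/2\,=\,3$, so $b_2\in\{0,1,2,3\}$. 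For each value of $b_2$ I would analyze the cup product $S^2 H^2\to H^4$ and bound $r$ from below, using Poincar\'e duality on the six-manifold to relate $H^4(X;\bQ)$ back to $H^2(X;\bQ)$ (both have the same dimension $b_2$, since $H^5\cong H^1=0$ by simple connectivity).

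The key arithmetic step is the following. By Hurewicz and the second-stage minimal-model computation already recorded in \eqref{n2}, we have
$$
\dim\pi_3(X)\otimes\bQ \,=\, b_3(X)+r\, ,
$$
and Corollary \ref{c:oddsympl} bounds the \emph{total} odd-degree rational homotopy by $n\,=\,3$. Rather than discarding the contribution of the higher odd homotopy groups, I would try to use the nondegeneracy of the symplectic form to force generators of even homotopy to appear, which in turn eats into the budget of $3$ available for all odd generators via the Friedlander--Halperin relation (iii) in Theorem \ref{t:fh}. Concretely, the symplectic class $[\widetilde\omega]\in H^2$ satisfies $[\widetilde\omega]^3\neq 0$, so the cup-length is exactly $3$; this means $[\widetilde\omega]\cup[\widetilde\omega]\neq 0$ in $H^4$, hence $[\widetilde\omega]^2\notin\ker(\cup)$, giving a concrete nonzero element of $S^2 H^2$ \emph{outside} the kernel. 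The main task is to leverage this together with $\dim S^2 H^2 = \binom{b_2+1}{2}$ and $\dim H^4 = b_2$ to bound $r$.

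The decisive case is $b_2\,=\,1$: then $\dim S^2 H^2\,=\,1$ and the single generator is $[\widetilde\omega]^2\neq 0$, so $r\,=\,0$ and Proposition \ref{p:b3} already yields $b_3\le 3$, not $0$ — so here I must argue separately that $b_3\,=\,0$ by examining the minimal model directly: with one degree-two generator $y$ (the class dual to $[\widetilde\omega]$) the elliptic model $(\Lambda(y,x),d)$ with $dx$ a power of $y$ is forced by the Hodge/Euler constraints, and this model has $\pi_3\otimes\bQ$ exactly accounting for the single odd generator closing off $y$, leaving no room for a degree-three cohomology class. For $b_2\in\{2,3\}$ the cup-product map $S^2 H^2\to H^4$ is between spaces of dimensions $3,6$ and $2,3$ respectively, and I would show that the symplectic constraint forces $\cup$ to be surjective with kernel of dimension exactly $3-b_3$, closing the loop. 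I expect the main obstacle to be the case $b_2=1$ (and possibly $b_2=0$), where the crude inequality from Proposition \ref{p:b3} is not sharp and one genuinely has to invoke the structure of the two-generator elliptic minimal model — equivalently, to show that a rationally elliptic six-manifold with $b_2\le 1$ is rationally equivalent to a space (such as a sphere, $\bP^3_\bC$, or a quadric) whose third Betti number vanishes.
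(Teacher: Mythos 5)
Your plan has genuine gaps and, as written, does not prove the proposition. The decisive defect is that the case analysis does not close in exactly the cases you yourself flag. For $b_2(X)=1$ you correctly observe that $r=0$, so Proposition \ref{p:b3} only gives $b_3(X)\le 3$; your proposed remedy --- that ``the elliptic model $(\Lambda(y,x),d)$ with $dx$ a power of $y$ is forced'' --- asserts the conclusion rather than proving it: a priori an elliptic minimal model with $b_2=1$ could have further even generators in degrees $4$ or $6$ and odd generators in degree $3$, and ruling these out requires precisely the work being omitted (for instance a regular-sequence argument showing a closed degree-$3$ generator would make the cohomology infinite dimensional). For $b_2(X)=2$ the argument is circular: even granting surjectivity of $\cup\,:\, S^2H^2(X;\bQ) \to H^4(X;\bQ)$, the kernel has dimension $3-2=1$, so Proposition \ref{p:b3} gives only $b_3\le 2$; and your phrase ``kernel of dimension exactly $3-b_3$'' is, via $b_3\le 3-r$, equivalent to the tautology $b_3\le b_3$. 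Moreover, surjectivity of the cup product is itself unjustified in this setting: the proposition concerns symplectic manifolds, where Hard Lefschetz (the natural source of such surjectivity) can fail. Only the case $b_2=3$ actually closes under your scheme, and the impossible case $b_2=0$ is dismissed by $[\widetilde\omega]\neq 0$, not by a separate argument.

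The idea you are missing is the one the paper uses, and it renders all case analysis unnecessary: Halperin's theorem (see \cite[p. 175, Theorem 1'(3)]{H}) that a finite $1$--connected CW--complex of elliptic homotopy type with \emph{positive Euler characteristic} has vanishing odd Betti numbers. Since $[\omega]$ and $[\omega]^2$ are nonzero, $b_2(X)=b_4(X)\ge 1$, and Poincar\'e duality with $b_1=b_5=0$ gives $e(X)=2+2b_2(X)-b_3(X)\ge 4-b_3(X)$; the bound $b_3(X)\le 3$ from Proposition \ref{p:b3} (which you already have) then yields $e(X)>0$, and Halperin's theorem concludes $b_3(X)=0$ at once. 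Note that in the paper the inequality $b_3\le 3$ is used only to force $e(X)>0$, not as the estimate that ultimately kills $b_3$; attempting to sharpen $b_3\le 3-r$ down to zero by controlling $r$, as your plan does, cannot work uniformly, since for $b_2=1$ one genuinely has $r=0$.
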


\begin{proof}
Since $X$ is 1--connected, we have $b_1(X)\,=\,0$. By Poincar\'e
duality,
$b_5(X)\,=\,0$, while $b_4(X)\,=\,b_2(X)$. Since the cohomology
classes represented by $\omega$ and $\omega^2$ are nontrivial,
the topological Euler characteristic $e(X)$ of $X$ admits
the bound
$$e(X) \,=\, 2+ 2 b_2(X)- 
b_3(X) \,\ge\, 4 - b_3(X)\, .$$

We noted above that $b_3(X) \,\le\, 3$. Hence $e(X)\,>0\,$. 
Halperin proved that if $X$ has elliptic homotopy type, and
$e(X)\,>\,0$, then all odd Betti numbers of $X$ vanish
(see \cite[p. 175, Theorem 1'(3)]{H}). This completes the
proof of the proposition.
\end{proof}

Proposition \ref{p:b3nul} and Hodge theory immediately yield
the following.

\begin{cor} \label{c:diamonds3}
If $X$ is a 1--connected compact K\"ahler threefold with 
elliptic homotopy type, then the Hodge diamond
of $X$ is one of the following four:
$$
\text{(a)} \quad
\begin{array}{ccccccc}
   &   &   & 1 &   &   &   \\
   &   & 0 &   & 0 &   &   \\
   & 0 &   & 1 &   & 0 &   \\
 0 &   & 0 &   & 0 &   & 0 \\
   & 0 &   & 1 &   & 0 &   \\
   &   & 0 &   & 0 &   &   \\
   &   &   & 1 &   &   &   \end{array}
\qquad \text{(b)} \quad
\begin{array}{ccccccc}
   &   &   & 1 &   &   &   \\
   &   & 0 &   & 0 &   &   \\
   & 0 &   & 2 &   & 0 &   \\
 0 &   & 0 &   & 0 &   & 0 \\
   & 0 &   & 2 &   & 0 &   \\
   &   & 0 &   & 0 &   &   \\
   &   &   & 1 &   &   &   \end{array}
$$
$$
\text{(c)} \quad
\begin{array}{ccccccc}
   &   &   & 1 &   &   &   \\
   &   & 0 &   & 0 &   &   \\
   & 0 &   & 3 &   & 0 &   \\
 0 &   & 0 &   & 0 &   & 0 \\
   & 0 &   & 3 &   & 0 &   \\
   &   & 0 &   & 0 &   &   \\
   &   &   & 1 &   &   &   \end{array}
\qquad \text{(d)} \quad
\begin{array}{ccccccc}
   &   &   & 1 &   &   &   \\
   &   & 0 &   & 0 &   &   \\
   & 1 &   & 1 &   & 1 &   \\
 0 &   & 0 &   & 0 &   & 0 \\
   & 1 &   & 1 &   & 1 &   \\
   &   & 0 &   & 0 &   &   \\
   &   &   & 1 &   &   &   \end{array}
$$
\end{cor}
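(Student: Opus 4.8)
The plan is to read off the constraints on the Hodge numbers from the two bounds already established and then run a short enumeration. Throughout I index the Hodge diamond of the threefold $X$ by the entries $h^{p,q}\,=\,\dim H^q(X,\Omega^p_X)$, with the row $p+q\,=\,k$ contributing $b_k(X)\,=\,\sum_{p+q=k}h^{p,q}$.

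First I would record the vanishing of all odd Betti numbers. Since $X$ is $1$--connected we have $b_1(X)\,=\,0$, and Poincar\'e duality gives $b_5(X)\,=\,b_1(X)\,=\,0$. The K\"ahler form endows $X$ with a symplectic structure and $\dim_{\mathbb R}X\,=\,6$, so Proposition \ref{p:b3nul} applies and yields $b_3(X)\,=\,0$. Via the Hodge decomposition these three vanishings force $h^{1,0}\,=\,h^{0,1}\,=\,0$, then $h^{3,0}\,=\,h^{2,1}\,=\,h^{1,2}\,=\,h^{0,3}\,=\,0$, and finally $h^{3,2}\,=\,h^{2,3}\,=\,0$; together with $h^{0,0}\,=\,h^{3,3}\,=\,1$ this pins down every entry of the diamond except those in the row $p+q\,=\,2$.

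Next I would bound that middle row. By Hodge symmetry $h^{2,0}\,=\,h^{0,2}$, so $b_2(X)\,=\,2h^{2,0}+h^{1,1}$, and since the class of the K\"ahler form is a nonzero element of $H^{1,1}(X)$ we have $h^{1,1}\,\ge\,1$. Corollary \ref{c:b2} gives $b_2(X)\,\le\,\dim_{\mathbb R}(X)/2\,=\,3$. Hence the pair $(h^{2,0},h^{1,1})$ must satisfy $2h^{2,0}+h^{1,1}\,\le\,3$ with $h^{1,1}\,\ge\,1$. A value $h^{2,0}\,\ge\,2$ is impossible, and the remaining solutions are $h^{2,0}\,=\,0$ with $h^{1,1}\,\in\,\{1,2,3\}$, giving the diamonds (a), (b), (c), and $h^{2,0}\,=\,1$ with $h^{1,1}\,=\,1$, giving diamond (d). I would then fill in the row $p+q\,=\,4$ using Serre duality $h^{p,q}\,=\,h^{3-q,3-p}$, i.e.\ $h^{2,2}\,=\,h^{1,1}$, $h^{3,1}\,=\,h^{2,0}$ and $h^{1,3}\,=\,h^{0,2}$, which completes each of the four diamonds.

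There is no genuine obstacle left at this stage: the only substantive input is Proposition \ref{p:b3nul}, whose proof (via Halperin's theorem on the vanishing of odd Betti numbers when the Euler characteristic is positive) has already done the hard work. Once $b_3(X)\,=\,0$ and $b_2(X)\,\le\,3$ are in hand, the statement reduces to the bounded enumeration above, so the corollary indeed follows immediately from the earlier results together with Hodge theory, the only care needed being the bookkeeping of the Hodge symmetries.
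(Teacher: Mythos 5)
Your proposal is correct and follows essentially the same route as the paper, which derives the corollary "immediately" from Proposition \ref{p:b3nul} (giving $b_3=0$), Corollary \ref{c:b2} (giving $b_2\le 3$), and standard Hodge theory ($h^{1,1}\ge 1$ from the K\"ahler class, Hodge symmetry, and duality); your write-up simply makes the enumeration of $(h^{2,0},h^{1,1})$ explicit.
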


The last diamond in the above list is ruled out by the
following proposition.

\begin{prop} \label{p:ke}
There does not exist any simply connected compact
K\"ahler threefold possessing the Hodge diamond (d) in
Corollary \ref{c:diamonds3}.
\end{prop}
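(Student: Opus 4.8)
The plan is to exclude diamond (d) by complex geometry rather than by homotopy theory. Among the four diamonds of Corollary \ref{c:diamonds3} it is the only one with a nonzero holomorphic two--form, that is $h^{2,0}(X)=1$, and it already satisfies all the Friedlander--Halperin bounds of Theorem \ref{t:fh} and has $b_3(X)=0$ in accordance with Proposition \ref{p:b3nul}; so nothing homotopical rules it out and the obstruction must come from the K\"ahler structure. First I would read off the numerics. From diamond (d) one has $q(X)=p_g(X)=0$ and $h^{2,0}(X)=1$, whence $\chi(\cO_X)=1-q(X)+h^{2,0}(X)-p_g(X)=2$. Hirzebruch--Riemann--Roch for a threefold then gives $c_1(X)\cdot c_2(X)=24\,\chi(\cO_X)=48$. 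In particular $c_1(X)\neq 0$ in $H^2(X;\bR)$.

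The crucial step is to upgrade $X$ to a projective manifold of Picard number one. Since $c_1(X)=-K_X$ is an integral class of type $(1,1)$ and is nonzero, while $h^{1,1}(X)=1$ forces the real $(1,1)$--space $H^2(X;\bR)\cap H^{1,1}(X;\bC)$ to be one--dimensional, this line is spanned by the rational class $c_1(X)$ and is therefore a rational line. The K\"ahler cone, being a nonempty open ray inside it, then contains classes in $H^2(X;\bQ)$, so Kodaira's projectivity criterion shows $X$ is projective. Because $q(X)=0$ and $H^2(X;\bZ)$ is torsion free (as $X$ is simply connected), $\mathrm{Pic}(X)=\mathrm{NS}(X)\cong\bZ$, generated by an ample line bundle $L$; write $K_X=dL$.

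Next I would run the trichotomy on the integer $d$. If $d<0$ then $-K_X$ is ample, $X$ is Fano, and Kodaira vanishing applied to $K_X\otimes(-K_X)=\cO_X$ gives $H^2(X;\cO_X)=0$, i.e.\ $h^{2,0}(X)=h^{0,2}(X)=0$, contradicting $h^{2,0}(X)=1$. If $d=0$ then $K_X=\cO_X$ and $c_1(X)=0$, contradicting $c_1(X)\cdot c_2(X)=48$. Hence $d>0$, so $K_X$ is ample; in particular $K_X$ is nef and $X$ is a smooth minimal projective threefold. For such a threefold Miyaoka's semipositivity theorem yields $c_2(X)\cdot K_X\geq 0$. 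But $c_2(X)\cdot K_X=-c_1(X)\cdot c_2(X)=-48<0$, a contradiction. Therefore no compact K\"ahler threefold realizes diamond (d).

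The step I expect to be the main obstacle is the passage to the projective, Picard--number--one setting: it is what converts a purely Hodge--theoretic datum into a situation governed by the threefold Chern--class inequalities, and it hinges on the fortunate fact that $c_1(X)$ is itself a nonzero rational generator of the one--dimensional space $H^{1,1}(X;\bR)$. Once projectivity and $\rho(X)=1$ are secured, the contradiction is immediate from $c_1(X)\cdot c_2(X)=48$ together with Miyaoka's inequality; the only point requiring care is to confirm that the surviving case $K_X$ ample genuinely meets the hypotheses of Miyaoka's theorem, which it does since an ample canonical class is nef.
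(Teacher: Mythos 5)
Your proof is correct, and it shares the paper's skeleton: both use $h^{1,1}(X)=1$ to make $c_1(X)$ proportional to the K\"ahler (resp.\ ample) generator, then run a sign trichotomy, with the Fano case eliminated in exactly the same way ($h^{2,0}=0$ for Fano manifolds, against $h^{2,0}(X)=1$ in diamond (d)). The differences lie in the other two cases and in what you set up beforehand. The paper stays in the K\"ahler category: writing $c_1(X)=\lambda\omega$, it handles $\lambda=0$ by observing that simple connectedness makes $K_X$ trivial, so $h^{3,0}=1$, contradicting the diamond; and it handles $\lambda<0$ via the Miyaoka--Yau inequality $\int_X(c_1^2-3c_2)\,\omega\le 0$, which together with $\int_X c_1c_2=24\chi(\cO_X)=48$ and $\int_X c_1^3<0$ yields the contradiction. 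You instead first establish projectivity (via the rationality of the line $H^{1,1}(X;\bR)$ and Kodaira embedding) and $\mathrm{Pic}(X)\cong\bZ$, and then let the single identity $c_1\cdot c_2=48$ do double duty: it kills the case $K_X=\cO_X$ outright, and in the ample-canonical case it contradicts Miyaoka's pseudo-effectivity theorem $c_2\cdot K_X\ge 0$ for smooth minimal threefolds. Both Chern-class inequalities come from the same source \cite{Mi}, but yours is the weaker and more robust of the two, and your final case needs only one numerical input ($c_1\cdot c_2$) rather than two ($c_1\cdot c_2$ and the sign of $c_1^3$); the price is the explicit (and correct) reduction to Picard number one, a step the paper's Fano and ample-canonical cases use only implicitly through the positivity of $\pm K_X$.
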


\begin{proof}
Let $X$ be a 3--fold having the Hodge diamond (d).
Fix a K\"ahler form $\omega$ on $X$. As 
$H^{1,1}(X)$ is generated by $\omega$, we have that $c_1(X)\,=\, 
\lambda \omega
\,\in\, H^2(X;\bR)$ for some $\lambda \,\in\, \bR$.

First assume that $\lambda\,>\, 0$. Therefore, the
anti--canonical line bundle $\det TX\,:=\,
\bigwedge^{\text{top}}TX$ is positive, so $X$ is complex
projective and Fano. But Fano manifolds 
have $h^{2,0}\,= \, 0$, contradicting the Hodge diamond (d).

Assume now that $\lambda \,=\, 0$. Hence $c_1(X) \,=\,0$. 
Since $X$ is also simply connected,
the canonical line bundle $K_X$ is trivial. Hence
$h^{3,0}\,=\, 1$, which contradicts the Hodge diamond (d).

Lastly, assume that $\lambda \,<\, 0$. This implies that
the canonical line bundle $K_X$ is positive. Therefore,
the Miyaoka--Yau inequality says that
$$
\int_X (c_1^2(X)- 3c_2(X)) \omega \,\le\, 0
$$
(see \cite[p. 449, Theorem 1.1]{Mi}, \cite{Y0}).
Substituting $\omega\,=\,\frac{1}{\lambda} c_1(X)$ in the above
inequality,
\begin{equation}\label{ye}
\int_X c_1^3(X) \,\ge\, 3 \int_X c_1(X) c_2(X)\, .
\end{equation}
But from the Hodge diamond (d) and the Hirzebruch--Riemann--Roch
theorem we derive that 
$$2\,=\,\chi (X, \cO_X)\,=\, \int_X \frac{1}{24} c_1(X) c_2(X)\, .$$
Therefore, $3 \int_X c_1(X) c_2(X)\,=\, 144$, while
$\int_X c_1^3(X)\,=\, \frac{1}{\lambda^3} \int_X \omega^3\,<\, 0$.
This contradicts the inequality in \eqref{ye}, and completes the
proof of the proposition.
\end{proof} 

The following proposition, which is a converse to Corollary 
\ref{c:diamonds3} and Proposition \ref{p:ke},
completes the proof of Theorem \ref{t:diamonds3}.

\begin{prop} \label{p:diamonds3}
If a 1--connected compact K\"ahler threefold has the
Hodge diamond (a),\,(b) or (c) in the list given
in Corollary \ref{c:diamonds3}, then it has elliptic 
homotopy type.
\end{prop}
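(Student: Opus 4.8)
The plan is to exploit formality of compact K\"ahler manifolds (\cite{DGMS}): since $X$ is formal, its rational homotopy type, and in particular whether it is elliptic, is determined by the cohomology algebra $A = H^*(X;\bQ)$. First I would record the algebra structure forced by the diamonds. Writing $b = b_2(X) \in \{1,2,3\}$, the Betti numbers are $1,0,b,0,b,0,1$. Hard Lefschetz with a K\"ahler class $\omega$ shows $\times\omega : H^2 \to H^4$ is an isomorphism, so $H^4 = H^2\cdot H^2$ and $H^6 = \bQ\cdot\omega^3$; thus $A$ is generated in degree $2$, a quotient $\bQ[H^2]/I$ of the polynomial algebra on $H^2$ with Hilbert function $(1,b,b,1)$ and socle degree $6$ (Artinian Gorenstein by Poincar\'e duality).

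Second, I would reduce ellipticity to a complete-intersection property. Each of these diamonds has $e(X) = 2+2b > 0$. By Halperin's theorem (\cite{H}; the positive-Euler-characteristic case already used in Proposition \ref{p:b3nul}), a $1$-connected elliptic space with positive Euler characteristic has complete-intersection rational cohomology; conversely, if $A = \bQ[x_1,\dots,x_b]/(f_1,\dots,f_b)$ is a complete intersection with the $x_i$ in degree $2$, then the pure Sullivan algebra $\big(\Lambda(x_1,\dots,x_b,y_1,\dots,y_b),\, d\big)$ with $d x_i = 0$ and $d y_j = f_j$ is a finite minimal model, so $X$ is elliptic. Hence \emph{$X$ is elliptic if and only if $A$ is a complete intersection}, and the whole proposition comes down to proving that the diamonds (a), (b), (c) force $A$ to be a complete intersection.

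For $b = 1$ this is immediate: $A = \bQ[x]/(x^4)$, the cohomology of $\bP^3_\bC$. For $b=2$, since $S^2 H^2$ (dimension $3$) surjects onto $H^4$ (dimension $2$), the ideal $I$ has exactly one quadratic generator, and a dimension count in degree $6$ forces exactly one further cubic generator; as the quotient is Artinian these two homogeneous elements form a system of parameters in a two-variable polynomial ring, hence a regular sequence, so $A$ is a complete intersection (the cohomology of $\bP^1_\bC\times\bP^2_\bC$). For $b = 3$ the map $S^2 H^2 \to H^4$ is a surjection of a $6$-dimensional space onto a $3$-dimensional one, so $I$ is generated by three quadrics $q_1,q_2,q_3$, and $A$ is a complete intersection precisely when $q_1,q_2,q_3$ form a regular sequence, i.e. when the net of conics they span has no common zero. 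By Macaulay duality this fails exactly when the cubic intersection form $C(\alpha) = \int_X \alpha^3$ is a sum of three cubes of linearly independent linear forms (the ``diagonal'' form), whose apolar algebra needs five generators and is not a complete intersection.

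The hard part will therefore be the case $b=3$: ruling out the diagonal cubic form for an actual K\"ahler threefold. The tool I would use is the Hodge--Riemann bilinear relations, which for \emph{every} class $u$ in the open, connected K\"ahler cone make the quadratic form $\alpha \mapsto \int_X u\,\alpha^2$ on $H^2$ nondegenerate of signature $(1,2)$. A common zero of the net would, via apolarity, produce a nonzero class $u_0$ for which this quadratic form degenerates to rank one (a perfect square). I would aim to derive a contradiction by studying the net of quadrics $\{\int_X u\,\alpha^2\}_{u\in H^2}$: it contains the nondegenerate members coming from the whole K\"ahler cone, and I expect that this positivity, together with the fact that the cone is open and convex and with the Riemann--Roch constraint $\int_X c_1(X)c_2(X) = 24$ coming from $\chi(\cO_X)=1$, forbids a rank-one member, so that the net has empty base locus and $A$ is the complete intersection modelled on $\bP^1_\bC\times\bP^1_\bC\times\bP^1_\bC$. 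Establishing this exclusion rigorously is the crux of the argument, and is where the K\"ahler (as opposed to merely symplectic or Poincar\'e-duality) hypothesis is essential.
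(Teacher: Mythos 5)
Your overall strategy --- formality, cohomology generated in degree two via Hard Lefschetz, presentation as a quotient of a polynomial ring on $H^2$, and ellipticity via a pure Sullivan (Koszul) model once the relations form a regular sequence --- is exactly the paper's. Case (a) is fine. Case (b) reaches the right conclusion, but the step ``as the quotient is Artinian these two homogeneous elements form a system of parameters'' is circular as written: what you know a priori is that $\bQ[y_1,y_2]/I$ is Artinian, not that $\bQ[y_1,y_2]/(q,c)$ is, and the latter fails exactly when $q$ and $c$ share a linear factor. With $c=\omega^2 y_2$ ($y_2$ primitive) the danger is $q=y_2(y_2-\beta\omega)$, i.e.\ the vanishing of the coefficient $\alpha$ in $y_2^2=\alpha\omega^2+\beta\omega y_2$; the paper kills this with the Hodge--Riemann inequality $\int_X\omega y_2^2<0$, so $\alpha<0$. (Alternatively, a graded Artinian Gorenstein algebra of codimension two is automatically a complete intersection, but you would have to invoke that.)

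The genuine gap is in case (c), and you have named it yourself: the exclusion of the degenerate net is precisely the content of the proposition there, and you only ``expect'' it. Two concrete problems. First, your Macaulay-duality dichotomy is false as stated: the apolar algebra of a ternary cubic $F$ with Hilbert function $(1,3,3,1)$ fails to be a complete intersection of three quadrics exactly when the net of polar conics $\{\alpha\mapsto\int_X u\,\alpha^2\}_{u\in H^2}$ contains a rank-one (double-line) member, and this locus is strictly larger than the forms equivalent to $x^3+y^3+z^3$. For instance $F=x^2y+z^3$ has Hilbert function $(1,3,3,1)$ and is not a sum of three cubes of independent linear forms (its cubic curve is singular, unlike the Fermat cubic), yet $\partial_yF=x^2$ is a double line, its apolar quadrics $XZ,\,Y^2,\,YZ$ have common zeros, and its apolar ideal needs cubic generators. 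So the target of your planned exclusion is misidentified: you must rule out every rank-one Lefschetz form $\alpha\mapsto\int_X u_0\alpha^2$, not just the diagonal cubic. Second, the exclusion is not a soft consequence of ``signature $(1,2)$ on the open K\"ahler cone,'' since the signature does degenerate outside the cone, and the constraint $\int_Xc_1(X)c_2(X)=24$ plays no role. The paper proves the regular-sequence property directly: it uses the negative definiteness of the $Q$-pairing on primitive $(1,1)$-classes to normalize the three quadric relations to $p_1=y_2^2+y_1^2-\cdots$, $p_2=y_2y_3-\cdots$, $p_3=y_3^2+y_1^2-\cdots$ (the decisive $+y_1^2$ terms come from $\int_X\omega y_i^2=-\int_X\omega^3$ for $y_i$ primitive), and then checks by hand that $(y_1,y_2,y_3)^4\subset(p_1,p_2,p_3)$, so the ideal has the irrelevant ideal as its radical and the $p_i$ form a regular sequence. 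Until you supply an argument of comparable force, case (c) --- the heart of the proposition --- remains unproved.
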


\begin{proof}
For each of the diamonds (a), (b) and (c), we will find a 
presentation for the real cohomology algebra $H^*(X; \bR)$ of 
any compact K\"ahler threefold $X$ realizing the 
diamond in question. From these presentations we will derive 
Sullivan's minimal model and ellipticity by using a Koszul complex.

The diamond (a) is the easiest to study, because we know from
Hodge theory
that if $\omega$ is a K\"ahler form on $X$, then $\omega^k \,\neq\, 0
\,\in\, H^{k,k}(X)$ for
$k\,=\,1\, ,2\, ,3$. Thus the real cohomology algebra of $X$ is
$H^*(X)\,= \,S^*(y)/(y^4) \,\cong\, H^*(\bP^3_{\bC})$. The formality 
of
closed K\"ahler manifolds implies that such $X$ is real
homotopy--equivalent 
to the complex projective space. Hence $X$ has elliptic type as pointed
out in Example \ref{ex:bundle}(i). Its only nontrivial real homotopy 
groups are $\pi_2(X) \otimes_{\mathbb Z} \bR \,\cong\, \bR$ and
$\pi_7(X) \otimes_{\mathbb Z} \bR \,\cong\, \bR$ (see 
\cite[XIII.A]{GM}).  

Next we consider 
diamond (b). Choose a basis $\{y_1\, , y_2\}$ for $H^2(X; \bR)$ such 
that $y_1$ is the class 
of the K\"ahler form on $X$, and $y_2$ is primitive.
By the Hard Lefschetz Theorem, the pair $\{y_1^2\, , y_1y_2\}$ is a 
basis
of $H^4(X;\bR)$; moreover, $y_1^3$ is the generator of $H^6(X;
\bR)$ with positive
orientation, and $y_1^2 y_2\,=\,0 \,\in\, H^6(X;\bR)$. 

Therefore,
$$
y_2^2\,=\,\alpha y_1^2+ \beta y_1 y_2 \,\in\, H^4(X;\bR)
$$
for some scalars $\alpha\, ,\beta \,\in\, \bR$. The class $y_2$ is real
and primitive. From the
signature of the $Q$--pairing (see \cite[Theorem 6.32]{Voi}),
$$
Q(y_2,\bar y_2)\,= \,- \int_X y_1 y_2^2 \,>\, 0
$$
and $y_1 y_2^2\,=\, \alpha y_1^3+ \beta y_1^2 y_2 \,= \,\alpha y_1^3$.
Hence we have $\alpha \,<\,0$.
Rescaling $y_2$, we may further impose the condition
on the selected basis $\{y_1\, ,y_2\}$ of $H^2(X)$ that
$$y_2^2\,=\, -y_1^2 + \beta y_1 y_2$$
with $\beta \,\in\, \bR$.

We have also shown that the cohomology algebra $H^*(X;\bR)$ is generated by
$H^2(X;\bR)$.
In other words, $H^*(X;\bR)$
is a quotient $\bR [y_1,y_2]/\cD$ of the commutative polynomial ring 
generated by  $y_1$ and $y_2$ by an ideal of
relations that we denote $\cD$. We will
prove that the two already identified relations $p_1(y_1,y_2)\,=\,y_1^2 -\beta 
y_1 y_2 +y_2^2$ and $p_2(y_1,y_2)\,=\,y_1^2 y_2$ actually generate 
the ideal $\cD$.

For any nonnegative integer $k$, let
$$
(\bR [y_1,y_2]/ (p_1, p_2))_k\, \subset\,
\bR [y_1,y_2]/ (p_1, p_2)
$$
be the linear subspace spanned by homogeneous polynomials of degree $k$.
Note that $(\bR [y_1,y_2]/ (p_1, p_2))_1$
has basis $\{y_1\, , y_2\}$, and $(\bR [y_1,y_2]/ (p_1, p_2))_2$ has 
basis $\{y_1^2\, , y_1 y_2\}$,
so they are isomorphic to $H^2(X; \bR)$ and $H^4(X;\bR)$ respectively.

We point out now that $(\bR [y_1,y_2]/ (p_1, p_2))_3$ has basis $y_1^3$. 
First note that $$\dim (\bR [y_1, y_2])_3\,=\,4$$ and $(p_1, p_2)_3$ 
is generated by $y_1 p_1\, , y_2 p_1$ and $p_2$. It is readily 
checked
that these three generators and $y_1^3$ form a basis of $\bR [y_1, y_2])_3$.

Now we will show that $(\bR [y_1, y_2])_4\,=\,(p_1, p_2)_4$.
The obvious inclusions are
\begin{alignat*}{1}
y_1^4 &= y_1^2 p_1 - (\beta y_1 +y_2) p_2 \\
y_1^3 y_2 &= y_1 p_2 \\
y_1^2 y_2^2 &= y_2 p_2 \\
y_1 y_2^3 &= y_1 y_2 p_1 - (y_1+ \beta y_2) p_2
\end{alignat*}
and using them we have $y_2^4\,=\,y_2^2 p_1 -y_2 p_2 -
\beta y_1 y_2^3 \,\in\, (p_1, p_2)_4$.

Finally, for degrees $k \,>\, 4$, 
$$
(\bR [y_1, y_2])_k\,=\, (\bR [y_1, y_2])_{k-4} \cdot (\bR [y_1, y_2])_4 
\,=\, (\bR [y_1, y_2])_{k-4} \cdot (p_1, p_2)_4\, .
$$ 
So assigning 
degree 2 to the variables $y_1\, , y_2$, we have an isomorphism of 
graded 
$\bR$--algebras $$\bR[y_1,y_2]/(p_1, p_2) \,\cong\, H^*(X;\bR)\, .$$

The above presentation for the cohomology algebra of $X$ allows us 
to define 
a cdga $M\,=\,S^*(y_1,y_2) \otimes_{\mathbb R} \wedge^*(x_1,x_2)$ 
with degrees
$$
|y_1|\,=\, |y_2|\,=\,2\, , |x_1|\,=\,3\, ,|x_2|\,=\,5\, ,
$$
and boundaries $dy_1\,=\,dy_2\,=\,0$, $dx_1\,=\, p_1(y_1,y_2)$,
$dx_2\,=\,p_2(y_1,y_2)$. It is equipped with a cdga morphism
$$\rho \,:\, M \,\longrightarrow \,H^*(X)$$ that sends $y_1,y_2$ to their 
namesake cohomology classes, and $x_1, x_2$ to zero.

The algebra $M$ is a Sullivan minimal cdga, so if $\rho$ is a 
quasi--isomorphism, then
$M$ is the minimal model of $X$ and yields the real homotopy 
groups of $X$.

To establish the quasi--isomorphism we are seeking, note that $M$ is a 
{\em pure Sullivan algebra} according to
the definition of \cite[\S~32]{FHT}: 
\begin{itemize}
\item it is finitely generated, 
$M=S^* Q \otimes \wedge^* P$, with $Q\,=\, \langle y_1\, , y_2 \rangle$ 
(the even degree generators), and $P\,=\,\langle x_1, x_2 \rangle$
(the odd degree generators) both finite dimensional; 
\item $d(Q)=0$; 
\item $d(P) \subset S^*Q$.
\end{itemize}
A pure Sullivan algebra has a filtration counting the number of odd degree 
generators in every monomial, and the boundary operator $d$ has degree -1 for 
this filtration. In this way, our differential algebra becomes the 
total space of a homological complex
$$
C_\bullet\,=\,S^* Q \otimes_{\mathbb R} \wedge^\bullet P\, ,
$$
given by
$$
S^* Q \otimes_{\mathbb R} \wedge^2 P \stackrel{d}{\longrightarrow} 
S^* Q \otimes_{\mathbb R} P 
\stackrel{d}{\longrightarrow} S^* Q
$$
which is in fact the Koszul complex for the elements $p_1, p_2 \in S^* 
Q \cong \bR [y_1, y_2]$.
The homology of this complex is therefore $H_\bullet(C_\bullet) 
\,\cong\, 
H^* (M)$; this isomorphism is not graded. There is a short exact 
sequence
$$
0 \longrightarrow H_{>0} (C_\bullet) \longrightarrow 
H_\bullet(C_\bullet) \longrightarrow 
H^*(X) \longrightarrow 0
$$
with the last term being given by isomorphisms $H_0(C_\bullet)
\,\cong\, S^* Q/(p_1, p_2) \,\cong\, H^*(X)$.

It remains now to show that $H_{>0}(C_\bullet)\,=\,0$. To prove this, we 
first
note that the ideal $$\cD\,=\,(p_1,p_2)$$ has
radical $(y_1,y_2)$ in $S^* Q \,=\, \bR [y_1, y_2]$
because of the inclusion of the fourth power $(y_1\, , y_2)^4 
\,\subset\, (p_1, p_2)$. Whenever the ideal 
generated by
$m$ homogeneous elements $p_1, \ldots, p_m$ in a polynomial ring $k 
[y_1, \ldots ,y_m]$
has radical $(y_1, \ldots, y_m)$, the elements $(p_1, \ldots, p_m)$ form 
a
regular sequence of maximal length, and they yield an acyclic Koszul 
complex. 
The version of this property in our cdga setting is:

\begin{prop} \label{p:regular}{\rm \cite[\S~32.3]{FHT}.}
For a pure Sullivan algebra $M\,=\,S^* Q \otimes \wedge^* P$ and 
associated homological complex $C_{\bullet}$ as above,
$$
\oplus_{j>0} H_j (C_\bullet)\,=\,0
$$
if and only if the generators $p_1, 
\ldots, p_m$ of 
$\cD$ form a regular sequence.
\end{prop}

We conclude that $M$ is indeed the minimal model of the 3--fold $X$, 
and its nontrivial
real homotopy groups are  $\pi_2(X) \otimes \bR \,\cong\, \bR^2$,
$\pi_3(X) \otimes \bR
\,\cong\, \bR$ and $\pi_5(X) \otimes \bR \,\cong\, \bR$.

The proof for Hodge diamond (c) employs the same argument used for (b), 
with one additional parameter:

Using the Hard Lefschetz decomposition on real cohomology 
$H^*(X)\,=\,H^*(X; \bR)$,
choose a basis $\{y_1\, , y_2\, , y_3\}$ of $H^2(X)$ such that $y_1$ is 
the class of the
K\"ahler form while $y_2$ and $y_3$ are primitive classes, meaning 
$y_1^2 y_2 \,=\,
0\,=\, y_1^2 y_3$. The elements $\{y_1^2\, , y_1y_2\, , y_1y_3\}$ form a 
basis of $H^4(X)$, and $y_1^3$ generates $H^6(X)$. 

As in the case of diamond (b), the algebra $H^*(X)$ is generated by 
$H^2(X)$, and hence $H^*(X)$
is a quotient of the polynomial ring $S^* H^2(X) \,\cong\, \bR [y_1, 
y_2, y_3]$
by an ideal $\cD$ of boundaries.

The above basis of $H^4(X)$ yields quadratic elements in $\cD$:
\begin{alignat*}{1}
y_2^2 &- \alpha_{11} y_1^2 -\alpha_{12}y_1 y_2 - \alpha_{13} y_1 y_3 \\
y_2 y_3 &- \alpha_{21} y_1^2 -\alpha_{22}y_1 y_2 - \alpha_{23} y_1 y_3 \\
y_3^2 &- \alpha_{31} y_1^2 -\alpha_{32}y_1 y_2 - \alpha_{33} y_1 y_3 
\end{alignat*}
for some $\alpha_{ij}\,\in \,\bR$. As we pointed out
for diamond (b), the $Q$--pairing is symmetric, and it is negative 
definite on real 
primitive 
cohomology, so by a $\bR$--linear change of basis on the primitive cohomology 
group $P^2(X)\,=\, \langle y_2\, , y_3 \rangle$ we obtain a simplified form of 
the above boundaries in $\cD$:
\begin{alignat*}{2}
p_1(y_1,y_2,y_3) &= y_2^2 + y_1^2 &-\beta_{12}y_1 y_2 - \beta_{13} y_1 y_3 \\
p_2(y_1,y_2,y_3) &=y_2 y_3 &-\beta_{22}y_1 y_2 - \beta_{23} y_1 y_3 \\
p_3(y_1,y_2,y_3) &=y_3^2 + y_1^2 &-\beta_{32}y_1 y_2 - \beta_{33} y_1 y_3 
\end{alignat*}
The cohomology algebra $H^*(X)$ satisfies the condition that 
$$y_1^2y_2\,=\, 0
\,=\, y_1^2y_3\, ,$$ and hence $y_1^2y_2$ and $y_1^2y_3$
belong to $\cD$. This fact yields the final simplification among 
the parameters.

\begin{lem} \label{l:2-rels}
With the selected basis for primitive cohomology $P^2(X)$, the set
of quadratic boundaries $\cD_2$ is spanned by the three elements
\begin{alignat*}{2}
p_1(y_1,y_2,y_3) &= y_2^2 + y_1^2 &-\alpha y_1 y_2 - \beta y_1 y_3 \\
p_2(y_1,y_2,y_3) &=y_2 y_3 &-\beta y_1 y_2 - \gamma y_1 y_3 \\
p_3(y_1,y_2,y_3) &=y_3^2 + y_1^2 &-\gamma y_1 y_2 - \delta y_1 y_3 
\end{alignat*}
for some $\alpha, \beta, \gamma, \delta \in \bR$.
\end{lem}

\begin{proof}
First note that
$$
y_1 y_2 y_3\,=\,y_1 p_2 + \beta_{22} y_1^2 y_2 + \beta_{23} y_1^2 y_3 
\,\in\, 
\cD
$$
and then
\begin{alignat*}{2}
y_3 p_1 &= y_1^2 y_3 - \beta_{12} y_1 y_2 y_3 - \beta_{13} y_1 y_3^2 
&\Longrightarrow y_2^2y_3 = \beta_{13} y_1 y_3^2 \mod \cD \\
y_2 p_2 &= y_2^2 y_3 -\beta_{22} y_1 y_2^2 -\beta_{23} y_1 y_2 y_3 
&\Longrightarrow y_2^2 y_3= \beta_{22} y_1 y_2^2 \mod \cD \\
y_3 p_2 &= y_2 y_3^2 -\beta_{22} y_1 y_2 y_3 -\beta_{23} y_1 y_3^2
&\Longrightarrow y_2 y_3^2= \beta_{23} y_1 y_3^2 \mod \cD \\
y_2 p_3 &= y_1^2 y_2 - \beta_{32} y_1 y_2^2 -\beta_{33} y_1 y_2 y_3 + y_2 y_3^2
&\Longrightarrow y_2 y_3^2 = \beta_{32} y_1 y_2^2 \mod \cD
\end{alignat*}
The basis $\{y_2, y_3\}$ of $P^2(X)$ has been so selected that 
the $Q$--pairing yields
$$
y_1 y_2^2\,=\,-y_1^3 \,=\, y_1 y_3^2\, .
$$
This element
is a generator of $H^6(X)$. Therefore the above identities for
$y_2^2 y_3, y_2 y_3^2$ imply that $\beta_{13}\,=\,\beta_{22}$ and
$\beta_{23}\,=\,\beta_{32}$.

In this way we have found three boundary elements that are quadratic 
in $y_1, y_2, y_3$, and are $\bR$--linearly independent. As the
dimension of $H^4(X)$ is three, and
it is generated by products of degree two classes, we conclude that
$\{p_1\, , p_2\, , p_3\}$ is a basis of $\cD_2$. 
\end{proof}

Continuing with the proof of Proposition \ref{p:diamonds3},
we will check that the ideal 
$$
\widetilde \cD\,=\,(p_1, p_2, p_3) \subseteq \cD
$$
is in fact the boundary ideal $\cD$.

It follows from Lemma \ref{l:2-rels} that 
$H^{2k}(X) \,\cong\, \bR [y_1, y_2, y_3]_k/\widetilde \cD_k$ for 
$k\,=\,1,2$.
It also follows from the identities among parameters in $p_1, p_2, p_3$ that
$$y_1^2 y_2, y_1^2 y_3, y_1 y_2 y_3 \,\in\, \widetilde 
\cD_3\, .$$
The choice of $y_2$ and $y_3$ as $Q$--orthogonal means that
$$
y_1 y_2^2\,=\,y_1 y_3^2\,=\, -y_1^3\, \in\, \bR [y_1,
y_2, y_3]/\widetilde \cD\, .
$$
Therefore, $y_1^3$ generates
$\bR [y_1, y_2, y_3]_3/\widetilde \cD_3$, and it is nontrivial in 
cohomology, so it is a generator of the quotient.

Consider $k\,=\,4$. Since any element of
$\bR [y_1, y_2, y_3]_3$ is congruent, modulo $\widetilde \cD$, to
$\lambda y_1^3$ for some $\lambda \,\in\, \bR$, it follows that
any element of $\bR [y_1, y_2, y_3]_4$
is congruent modulo $\widetilde \cD$ to a linear combination of $y_1^4$,
$y_1^3 y_2$ and $y_1^3 y_3$.
The last two monomials are multiples of $y_1^2 y_2$ and $y_1^2 y_3$ 
respectively, and so they lie in $\widetilde \cD$. It is straight 
forward to verify that  $y_1^4$ is a 
linear combination of $y_1^2 p_1$ and elements of $\widetilde 
\cD_4$. The conclusion is that $\bR [y_1, y_2, y_3]_4 \,=\, \widetilde 
\cD_4 \,=\, \cD_4$.

As was observed for diamond (b), for $k>4$,
$$\bR [y_1, y_2, y_3]_k \,=\, 
\bR [y_1, y_2, y_3]_{k-4} \cdot \bR [y_1, y_2, y_3]_4 \,=\, \bR [y_1, 
y_2, y_3]_{k-4} \cdot \widetilde \cD_4 \,=\, \widetilde \cD_k\, .$$ 

Putting all the homogeneous components together, we have proved that 
$\cD=\widetilde \cD$. In other words,
$H^*(X)\,=\, \bR [y_1, y_2, y_3]/(p_1, p_2, p_3)$.

Let $M\,=S^* V^2 \otimes \wedge^* V^3$ be the free cdga
generated
by $V^2\,=\, \langle y_1, y_2, y_3 \rangle$ in degree two and 
$V^3\,=\, \langle x_1, x_2, x_3 
\rangle$ in degree three, with boundaries
$$dy_1\,=\,dy_2\,=\,dy_3\,=0\, ~\,~\, \text{~and~}\,~\,~\, 
dx_j\,=\,p_j\, ~\,~\,~ j\,=\, 1,2,3\,  ,
$$
where $p_j \,\in\, \bR [y_1, y_2, y_3] \,\cong\, S^* V^2$ are the 
generators
of $\cD$.

By its definition, the algebra $M$ is endowed with a morphism 
$$ \rho \,:\, M\longrightarrow H^*(X)\, .
$$
We will prove that $\rho$ is a quasi-isomorphism.

This is done as in the case of Hodge diamond (b): again $M$ is a pure 
Sullivan algebra;
the filtration by number of odd degree variables defines a Koszul complex
structure on $M$, and there is a short exact sequence
$$
0 \longrightarrow H_{>0} (C_\bullet) \longrightarrow H^* M 
\longrightarrow H^*(X) \longrightarrow 0\, .
$$
As before, the radical of $\cD$ is $(y_1, y_2, y_3)$ because of the 
inclusion of its fourth power $(y_1, y_2, 
y_3)^4 \subset
\cD$, thus the 3 generators $p_1, p_2, p_3$ form a regular sequence in
$\bR [y_1, y_2, y_3]$. Thus the Koszul complex $C_\bullet$ has $H_{>0} 
(C_\bullet)\,=\,0$, and $M$ is the minimal model of $H^*(X)$. The only 
nontrivial 
real homotopy groups of $X$ are $\pi_2(X) \otimes \bR\,\cong\,\bR^3$ and
$\pi_3(X) \otimes \bR
\,\cong\, \bR^3$. This completes the proof of Proposition 
\ref{p:diamonds3}.
\end{proof}

Corollary \ref{c:diamonds3}, Proposition \ref{p:ke} and Proposition 
\ref{p:diamonds3} together complete the proof of Theorem \ref{t:diamonds3}.
Examples of $1$--connected projective threefolds realizing the three 
possible Hodge diamonds are
readily found using Lemma \ref{l:fibration}:

\begin{ex} \label{ex:3bundles}
\begin{enumerate}
\item The projective space $\bP^3_{\bC}$ and the quadric threefold
$Q_2 \,\subset\, \bP^4_{\bC}$ have Hodge diamond (a).
\item $\bP^1_{\bC}$--bundles over $\bP^2_{\bC}$ and $\bP^2_{\bC}$--bundles 
over $\bP^1_{\bC}$ have Hodge diamond (b).
\item $\bP^1_{\bC}$--bundles over a Hirzebruch surface ${\mathbb S}_h$, and
bundles of Hirzebruch surfaces over $\bP^1_{\bC}$ have Hodge diamond (c).
\end{enumerate}
\end{ex}
 
We will now consider Fano threefolds. Fano threefolds are
classified up to deformations. Iskovskih classified
Fano threefolds with $b_2\,=\,1$ \cite{Iz}, \cite{Iz89}, while
Mori and Mukai classified Fano threefolds with $b_2 \, > \, 1$
\cite{MM} (all up to deformations). This allows
us to identify the ones having elliptic homotopy type.

The entries in the table of Fano threefolds at the end of \cite{MM}
are henceforth referred to as ``entries'' without further
clarification.

\begin{prop} \label{p:fanos}
Let $X$ be a Fano threefold with elliptic homotopy type.

If $X$ has Hodge diamond (a) in Theorem \ref{t:diamonds3}, then,
up to deformations, $X$ is one of the following:
\begin{enumerate}
\item $\bP^3_{\bC}$,
\item the quadric $Q \,\subset \,\bP^4_{\bC}$, 
\item the Fano manifold $X_{22} \,\subset\, \bP^{13}_{\bC}$ with genus 
$g\,=\,-\frac{1}{2}K_X^3+1\,=\,12$, defined as the subvariety
of the Grassmannian ${\rm Gr}(3,7)$ parametrizing linear spaces $L^3 
\,\subset\, \bC^7$
that are simultaneously isotropic for three general skew--symmetric
forms on ${\mathbb C}^7$ of maximal
rank (see \cite{Mu89}, \cite{Mu92}). 
\end{enumerate}

If $X$ has Hodge diamond (b) in Theorem \ref{t:diamonds3}, then, 
up to deformations, $X$ is one of the following:
entries 20, 21, 22, 24, 26, 27, 29, 30, 31, 32, 33, 34, 35 and 36
in the list of Fano 3--folds with $b_2(X)\,=\,2$ given in
\cite{MM}.

If $X$ has Hodge diamond (c) in Theorem \ref{t:diamonds3},
then, up to deformations, $X$ is one of the following:
entries 5, 8, 10, 12, 13, 15, 16, 17, 18, 19, 20, 21, 22, 23,
24, 25, 26, 27, 28, 29, 30 and 31 in the list of Fano 
3--folds with $b_2(X)\,=\,3$ in \cite{MM}.
\end{prop}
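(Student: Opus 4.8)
The plan is to convert the elliptic homotopy type hypothesis into a purely numerical condition on the Hodge diamond, and then to extract the answer from the existing deformation classifications of Fano threefolds. First I would reduce to numerics. For a Fano threefold $X$ the anticanonical bundle $-K_X$ is ample, so Kodaira vanishing gives $H^i(X,\cO_X)=0$ for $i>0$, whence $h^{1,0}(X)=h^{2,0}(X)=h^{3,0}(X)=0$; moreover Fano manifolds are simply connected, so $b_1(X)=0$. Consequently the entire Hodge diamond of $X$ is determined by the two numbers $b_2(X)=h^{1,1}(X)$, the Picard number $\rho$, and $b_3(X)=2h^{2,1}(X)$. Comparing with the diamonds (a), (b), (c) of Theorem \ref{t:diamonds3} and invoking that theorem, one sees that $X$ has elliptic homotopy type if and only if $h^{2,1}(X)=0$ and $\rho\in\{1,2,3\}$, the three cases $\rho=1,2,3$ producing respectively the diamonds (a), (b) and (c). Equivalently, by Corollary \ref{c:b2} the case $\rho\ge 4$ is excluded outright, while Proposition \ref{p:b3nul} forces $b_3(X)=0$, i.e.\ a trivial intermediate Jacobian. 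This reduces the whole problem to detecting, among the Fano families with $\rho\le 3$, those whose intermediate cohomology $H^{2,1}$ vanishes.

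Next I would run through the classifications family by family. For $\rho=1$ I would use Iskovskih's list \cite{Iz}, \cite{Iz89}, organized by index and degree, retaining only the rigid families with $h^{2,1}=0$. For $\rho=2$ and $\rho=3$ I would turn to the Mori--Mukai tables \cite{MM} with $b_2=2$ and $b_2=3$ respectively, and select the entries whose intermediate cohomology vanishes, recording for each the corresponding item in the statement. A useful independent check is that many of these Fanos are realized as projective bundles over, or blow-ups of, lower-dimensional Fano manifolds, or as quadric and conic bundles; for the bundle and blow-up constructions the vanishing of $b_3$ follows from the projective-bundle and blow-up formulas, and in those cases the elliptic homotopy type can in fact be read off directly from Lemma \ref{l:fibration}, independently of the Hodge-diamond argument.

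The main obstacle is the bookkeeping in this last step. The Mori--Mukai tables are indexed by $-K_X^3$ and by construction type rather than by $b_3$, so for each of the thirty-six families with $b_2=2$ and the thirty-one families with $b_2=3$ one must determine $h^{2,1}$, either by locating it in the literature computing Hodge numbers of Fano threefolds or by computing it directly from the given geometric description. Separating precisely the entries with trivial intermediate Jacobian from those carrying nontrivial intermediate cohomology is where essentially all the labour lies; the numerical reduction of the first step is exactly what makes this labour finite and mechanical.
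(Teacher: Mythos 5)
Your proposal follows the paper's proof essentially verbatim: reduce, via the vanishing of $h^{p,0}$ for Fano manifolds and Theorem \ref{t:diamonds3}, to the numerical condition $b_2(X)\le 3$ and $b_3(X)=0$, and then sieve Iskovskih's and Mori--Mukai's classifications for the families with vanishing $b_3$. The only substantive work the paper carries out that you defer is the case-by-case computation of $b_3$ for the $b_2=1$ families (via hyperplane-section, Riemann--Roch--Hirzebruch and Euler-number arguments); for $b_2\ge 2$ the Mori--Mukai table already records $b_3$ for each entry, so that step is lighter than you anticipate.
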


\begin{proof}
As a Fano manifold $X$ has
no holomorphic forms of positive degree, it satisfies 
$b_2(X)\,=\,h^{1,1}(X)$, and $X$ can have Hodge diamond (a), (b) or (c)
in Theorem \ref{t:diamonds3} if and only if $b_3(X)=0$ and $b_2(X)=1,2$ or 3
respectively.

Therefore it suffices to go over the classification of Fano threefolds and 
check the Betti numbers $b_2$ and $b_3$.

First assume that $b_2(X)\,=\,1$.
We use the classification of Fano 3--folds with $b_2(X)\,=\,1$ in 
\cite{Iz}, \cite{Iz89}.

The cases of index $r\,=\,4\, ,3$ are $\bP^3_{\bC}$ 
and the quadric $Q_2 \,\subset\, \bP^4_{\bC}$.
 
The cases $V_d$ of index $r\,=\, 2$ (for which the classification in 
\cite{Iz} is correct), ordered according to their degree $d$
 are:
\begin{itemize}
\item $V_5$ is a section of the Grassmannian $\text{Gr}\,(2,5)$ by 
three general 
hyperplanes in the Pl\"ucker embedding. By the hyperplane section 
theorem,
$b_3(V_5) \,\ge\, b_3(\text{Gr}\,(2,5) \,=\,1$.
\item $V_4$ is the complete intersection of two quadrics in $\bP^5_{\bC}$. 
The hyperplane section and Riemann--Roch--Hirzebruch theorems yield 
that $b_3(V_4)\,=\,4$.
\item $V_3$ is the cubic threefold in $\bP^5_{\bC}$, and by
Riemann--Roch--Hirzebruch has 
$b_3(V_3)\,=\,10$.
\item $V_2$ is a double cover of $\bP^3_{\bC}$ branched over a smooth surface 
of degree 4. Computing the topological Euler number it follows that 
$b_3(V_3)\,=\,20$.
\item $V_1$ is a smooth hypersurface, of degree six, in the weighted 
projective 
space $\bP\{1,1,1,2,3\}$. Computing the topological Euler number it 
follows that $b_3(V_1)\,=\, 42$.
\end{itemize}

The cases with index $r\,=\,1$ are classified according to the genus $g$ 
of the canonical curve section: for every
$$g \,\in\, \{ 2, \ldots, 10, 12 \}\, ,$$ 
there 
is a family or two of Fano manifolds of degree $2g-2$ embedded in 
$\bP^{g+1}_{\bC}$;
we will denote these Fano manifolds by $X_{2g-2}$. The classification of 
the cases with
$r=1$ in \cite{Iz} was corrected in \cite{Iz89}, incorporating 
the missing cases classified by Gushel.
\begin{itemize}
\item $X_2$ is the double cover of $\bP^3_{\bC}$. ramified over
a smooth surface of degree 6. Computing the topological Euler 
number yields that $b_3(X_2)\,=\,104$.
\item For $g=3$, there are two families.\\
First family: $X_4$ a smooth 
quartic 
hypersurface
in $\bP^4_{\bC}$; now, by Riemann--Roch--Hirzebruch, 
$b_3(X_4)\,=\,60$.\\
Second family: 
$X_4$ is the double cover of a quadric $Q_2 \,\subset\, \bP^4_{\bC}$
branched over
a divisor of degree 8; in this case, Riemann--Roch--Hirzebruch and the Euler
number of the cover yield $b_3(X_4)\,=\,60$.

\item $X_6$ is the complete intersection of a quadric and
a cubic hypersurfaces in $\bP^5_{\bC}$, and by
Riemann--Roch--Hirzebruch, $b_3(X_6)\,=\,40$.
\item $X_8$ is the complete intersection of three quadrics in
$\bP^6_{\bC}$, and by Riemann--Roch--Hirzebruch we have $b_3(X_8) \,=\,28$.

\item The case of $g\,=\,6$, which was completed by Gushel, has two 
families.\\
First family:
$X_{10}$ is the transverse intersection of the Pl\"ucker embedding 
of $\text{Gr}\,(2,5)$ in $\bP^9_{\bC}$ with two hyperplanes and a 
quadric.\\
Second family: $X_{10}$, is the intersection of the cone over $V_5$ 
(defined
above) with a quadric in $\bP^6_{\bC}$. The second family is in fact
a deformation of the first one, so both have the same Betti numbers. The 
hyperplane
section theorem shows that $b_3(X_{10}) \,\ge\, 
b_3(\text{Gr}\,(2,5))\,=\,1$.
\item Mukai shows in \cite{Mu92} that $b_3$ for the 
remaining genera are as follows: $b_3(X_{12})\,=\,14$, 
$b_3(X_{14})\,=\,10$, $b_3(X_{16})\,=\,6$,
$b_3(X_{18})\,=\,4$ and $b_3(X_{22})\,=\,0$.
\end{itemize}

The conclusion is that the only Fano 3--folds with $b_2(X)\,=\,1$ and 
$b_3(X)\,=\,0$ are $\bP^3_{\bC}$, the quadric threefold, and $X_{22}$.
We also note that $X_{22}$
is rational homotopy equivalent to the projective space.

Fano threefolds with $b_2(X) \,>\, 1$ are classified by Mori and Mukai 
in
\cite{MM}, and the table at the end of \cite{MM} lists the value
of $b_3$ for each family. So the remaining cases in the theorem 
may be read directly from that table.
\end{proof}

\begin{rem}
Comparing the classification in Proposition \ref{p:fanos}
with the classification of homogeneous complex manifolds of dimension
three in \cite{W} we conclude that $\bP^3_{\bC}$ and the quadric 
threefold $Q \subset \bP^4_{\bC}$ 
are the only homogeneous spaces among the $1$--connected
Fano threefolds with elliptic homotopy type.
\end{rem}

\end{document}